\newlength{\oursubsecgap}
\mathchardef\ordinarycolon\mathcode`\:
\def\vcentcolon{\mathrel{\mathop\ordinarycolon}}
\theoremstyle{plain}
\newtheorem{thm}{Theorem}[section]
\newtheorem{lem}[thm]{Lemma}
\newtheorem{prop}[thm]{Proposition}
\theoremstyle{definition}
\newtheorem{defn}[thm]{Definition}
\newtheorem{exmp}[thm]{Example}
\newtheorem{notn}[thm]{Notation}
\newtheorem{rem}[thm]{Remark}
\newcommand{\bebe}{\Gamma}
\newcommand{\cga}{\phi}
\newcommand{\cgb}{\psi}
\newcommand{\diag}{\delta}
\newcommand{\dialg}{\mathcal{D}}
\newcommand{\elltwo}{L^2( \R_+; \mul )}
\newcommand{\evec}[1]{\evecc( #1 )}
\newcommand{\evecc}{\varepsilon}
\newcommand{\evecs}{\mathcal{E}}
\newcommand{\fock}{\mathcal{F}}
\newcommand{\half}{\mbox{$\frac{1}{2}$}}
\newcommand{\ham}{H}
\newcommand{\hdiag}{\ham_\rd}
\newcommand{\hilb}{\mathsf{h}}
\newcommand{\hilc}{\mathsf{H}}
\newcommand{\hint}{\ham_{\mathrm{int}}}
\newcommand{\hoffd}{\ham_{\mathrm{o}}}
\newcommand{\hpar}{\ham_{\mathrm{par}}}
\newcommand{\hsys}{\ham_{\mathrm{sys}}}
\newcommand{\htot}{\ham_{\mathrm{tot}}}
\newcommand{\id}{I}
\newcommand{\indf}[1]{\indff_{#1}}
\newcommand{\indff}{1}
\newcommand{\ini}{\mathsf{h}}
\newcommand{\opsp}{\mathsf{V}}
\newcommand{\opsq}{\mathsf{W}}
\newcommand{\mmul}{{\widehat{\mul}}}
\newcommand{\mmulb}{{{\widehat{\mul}}\rb}}
\newcommand{\mul}{\mathsf{k}}
\newcommand{\ppi}{\widetilde{\pi}}
\newcommand{\sstate}{\widetilde{\state}}
\newcommand{\State}{\varrho}
\newcommand{\state}{\rho}
\newcommand{\statesp}{\mathsf{K}}
\newcommand{\tto}[1]{\rightarrow_{#1}}
\newcommand{\vac}{\omega}
\newcommand{\wga}{\Phi}
\newcommand{\wgb}{\Psi}
\newcommand{\C}{\mathbb{C}}
\newcommand{\R}{\mathbb{R}}
\newcommand{\Z}{\mathbb{Z}}
\newcommand{\algten}{\odot}
\newcommand{\bop}[2]%
{\ifthenelse{\equal{#2}{}}{\bopp( #1 )}{\bopp( #1; #2 )}}
\newcommand{\bopp}{B}
\newcommand{\cb}{\mathrm{cb}}
\newcommand{\dyad}[2]{| #1 \rangle \langle #2 |}
\newcommand{\etc}{\textit{et cetera}}
\newcommand{\hb}{{\hilc\rb}}
\newcommand{\I}{\mathrm{i}}
\newcommand{\ie}{\textit{i.e., }}
\newcommand{\kbo}[3]
{\ifthenelse{\equal{#3}{}}{#1 \kboo( #2 )}{#1 \kboo( #2; #3 )}}
\newcommand{\kboo}{\bopp}
\newcommand{\lift}[2]{{#1} \matten {\id_{\bop{#2}{}}}}
\newcommand{\mat}[2]{{#2} \matten \bopp( #1 )}
\newcommand{\matt}{\mathrm{M}}
\newcommand{\matten}{\mathop{\otimes_\matt}}
\newcommand{\mmodf}[2]{m( #1, #2)}
\newcommand{\nmodf}[2]{m_d( #1, #2)}
\newcommand{\rb}{\mathrm{b}}
\newcommand{\rd}{\mathrm{d}}
\newcommand{\re}{\mathrm{e}}
\newcommand{\rf}{\mathrm{f}}
\newcommand{\std}{\,\rd}
\newcommand{\tr}{\mathop{\mathrm{tr}}\nolimits}
\newcommand{\uwkten}{\mathop{\overline{\otimes}}}
\newcommand{\vna}{\mathcal{M}}
\renewcommand{\geq}{\geqslant}
\renewcommand{\leq}{\leqslant}
\begin{document}

\begin{center}
{\Large Quantum random walks and thermalisation}\\[1ex]
{\normalsize Alexander C. R. Belton}\\[1ex]
{\small Department of Mathematics and Statistics\\
Lancaster University, United Kingdom\\[0.5ex]
\href{mailto:a.belton@lancaster.ac.uk}%
{\textsf{a.belton@lancaster.ac.uk}} \qquad \today}
\end{center}

\begin{abstract}\noindent
It is shown how to construct quantum random walks with particles in an
arbitrary faithful normal state. A convergence theorem is obtained for
such walks, which demonstrates a thermalisation effect: the limit
cocycle obeys a quantum stochastic differential equation without gauge
terms. Examples are presented which generalise that of Attal and Joye
(2007, \textit{J.~Funct.\ Anal.}~\textbf{247}, 253--288).
\end{abstract}

{\footnotesize\textit{Key words:} quantum random walk; thermalisation;
thermalization; quantum heat bath; repeated interactions; quantum
Langevin equation; toy Fock space.

}

\vspace{1ex}
{\footnotesize\textit{MSC 2000:} %
81S25 (primary);    
46L07,              
46L53,              
46N50 (secondary).  

}

\section{Introduction}

A quantum random walk \cite{Blt08} may be interpreted as the dynamics
of a quantum system that interacts periodically with a stream of
identical particles, each of which lies in the vacuum state. As
observed in \cite{AtJ07}, a Gelfand--Naimark--Segal construction may
be employed to consider particles in a more general state (see also
Franz and Skalski~\cite{FrS07}); for a particular choice of evolution,
one generated by a Hamiltonian which describes dipole-type
interaction, Attal and Joye demonstrated that the limit flow involves
fewer quantum noises than might na\"{i}vely be expected, a so-called
`thermalisation' effect \cite[Theorem~7]{AtJ07}.

Inspired by Attal and Joye's work, techniques from \cite{Blt08} are
used below to show that thermalisation occurs for a large class of
quantum random walks. Let $\state$ be a faithful normal state on the
particle algebra $\bop{\statesp}{}$, where the Hilbert space
$\statesp$ may be infinite dimensional, and let $d$ be a
$\state$-preserving conditional expectation on $\bop{\statesp}{}$.
Suppose the linear maps
$\wga(\tau) : \bop{\ini}{} \to \bop{\ini \otimes \statesp}{}$ are such
that, in a suitable sense,
\[
\tau^{-1} \diag( \wga(\tau)( a ) - a \otimes \id_\statesp ) + %
\tau^{-1 / 2} \diag^\perp( \wga(\tau)( a ) ) \to \wgb( a ) \qquad %
\mbox{as } \tau \to {0+}
\]
for all $a \in \bop{\ini}{}$, where
$\diag := \id_{\bop{\ini}{}} \uwkten d$ and
$\diag^\perp := \id_{\bop{\ini \otimes \statesp}{}} - \diag$. Then the
random walk with generator $\wga(\tau)$ and particle state $\state$
converges to a quantum stochastic cocycle~$k$ on $\ini \otimes \fock$,
where $\fock$ is the Boson Fock space over $\elltwo$, which satisfies
the following quantum Langevin equation:
\begin{equation}\label{eqn:coqsde}
k_t( a ) = a \otimes \id_\fock + %
\sum_{\alpha, \beta} \int_0^t k_s( \cgb^\alpha_\beta( a ) ) %
\std\Lambda^\beta_\alpha( s ) %
\qquad \forall\, t \in \R_+, \ a \in \bop{\ini}{}.
\end{equation}
The generator of this cocycle is a linear mapping
$\cgb : \bop{\ini}{} \to \bop{\ini\otimes\mmul}{}$, where~$\mmul$ is
the Hilbert space in the GNS representation corresponding to $\state$,
the vector $\vac$ gives the associated state and
$\mul := \mmul \ominus \C\vac$. The generator $\psi$, given explicitly
in terms of~$d$ and $\wgb$, is such that $\cgb^\alpha_\beta = 0$
unless $\alpha = 0$ or $\beta = 0$: there is no contribution from the
gauge integrals in~(\ref{eqn:coqsde}). If $n := \dim \statesp$ is
finite then there are $n^2 - 1$ independent quantum noises when
working with particles in the vacuum state and at most twice that in
the situation described above, not $n^4 - 1$ as might first appear
necessary: this is the thermalisation phenomenon.

The formulation adopted below uses matrix spaces over operator spaces,
the Lindsay--Wills approach to quantum stochastics. This setting is
both natural and fruitful, allowing consideration of walks on
$C^*$~algebras \cite{Blt08} and quantum groups \cite{LiS08}, for
example.

This note is structured as follows: Section~\ref{sec:prelim} rapidly
introduces quantum random walks and presents the main convergence
theorem from~\cite{Blt08}; Section~\ref{sec:results} contains the
result outlined above, Theorem~\ref{thm:main}, and its proof, which is
remarkably simple; Section~\ref{sec:egs} presents two classes of
examples, which include the Attal--Joye model (Examples~\ref{eg:ajhp}
and~\ref{eg:ajeh}).

\subsection{Conventions and notation}

All vector spaces have complex scalar field; all inner products are
linear in the second argument. An empty sum or product equals the
appropriate additive or multiplicative unit, respectively.

The symbol $:=$ is to be read as `is defined to equal' (or
similarly). The indicator function of a set~$A$ is denoted by
$\indf{A}$. The sets of non-negative integers and non-negative real
numbers are denoted by $\Z_+ := \{ 0, 1, 2, \ldots \}$ and
$\R_+ := {[ 0, \infty [}$. The identity operator on a vector space $V$
is denoted by~$\id_V$. Algebraic, Hilbert-space and ultraweak tensor
products are denoted by $\algten$, $\otimes$ and $\uwkten$,
respectively, with the symbol $\otimes$ also denoting the spatial
tensor product of operator spaces. The Dirac dyad $\dyad{u}{v}$
denotes the linear transformation on an inner-product space $V$ such
that $w \mapsto \langle v, w \rangle_V u$.

\section{Preliminaries}\label{sec:prelim}

\subsection{Quantum random walks}

\begin{defn}
Let $\opsp$ be a concrete operator space acting on the Hilbert
space~$\hilb$, \ie a closed subspace of $\bop{\hilb}{}$. Recall that
the \emph{matrix space}
\[
\mat{\hilc}{\opsp} := %
\{ T \in \bop{\hilb \otimes \hilc}{} : %
E^x T E_y \in \opsp \ \ \forall\, x, y \in \hilc \}
\]
is an operator space for any Hilbert space $\hilc$, where
$E^x \in \bop{\hilb \otimes \hilc}{\hilb}$ is the adjoint of
$E_x : u \mapsto u \otimes x$.

The inclusions
$\opsp \otimes \bop{\hilc}{} \subseteq \mat{\hilc}{\opsp} %
\subseteq \opsp \uwkten \bop{\hilc}{}$
hold, with the latter an equality if and only if $\opsp$ is
ultraweakly closed, and there is the natural identification
$\mat{\hilc_2}{( \mat{\hilc_1}{\opsp} ) } = %
\mat{\hilc_1 \otimes \hilc_2}{\opsp}$.
\end{defn}

\begin{defn}
Suppose $\hilc\neq\{0\}$ and let $\opsq$ be another operator space. A
linear map $\wga : \opsp \to \opsq$ is \emph{$\hilc$ bounded} if it is
completely bounded, or it is bounded and the space $\hilc$ is finite
dimensional. The Banach space of such $\hilc$-bounded maps is denoted
by $\kbo{\hilc}{\opsp}{\opsq}$ and is equipped with the
\emph{$\hb$ norm}
\[
\| \cdot \|_\hb : \wga \mapsto \| \wga \|_\hb :=%
\left\{\begin{array}{ll}
(\dim \hilc) \, \|\wga\| & \mbox{if } \dim \hilc < \infty, \\[0.5ex]
\| \wga \|_\cb & \mbox{if } \dim \hilc = \infty,
\end{array} \right.
\]
where $\| \cdot \|_\cb$ denotes the completely bounded norm.
\end{defn}

\begin{rem}
Note that $T \mapsto E^x T E_y$ equals the slice map
$\id_{\bop{\hilb}{}} \uwkten \omega_{x,y}$, where $\omega_{x,y}$ is
the ultraweakly continuous functional
$X \mapsto \langle x, X y \rangle_\hilc$. As
$\{ \omega_{x,y}:x,y\in\hilc\}$ is total in the predual
$\bop{\hilc}{}_*$, it follows that
\[
(\id_{\bop{\hilb}{}} \uwkten \omega)( T ) \in \opsp %
\qquad \forall\, \omega \in \bop{\hilc}{}_*, \ %
T \in \opsp \matten \bop{\hilc}{}.
\]

If $m : \bop{\hilc_1}{} \to \bop{\hilc_2}{}$ is linear,
$\hilb$~bounded and ultraweakly continuous then so is
\[
\id_{\bop{\hilb}{}} \uwkten m : %
\bop{\hilb \otimes \hilc_1}{} \to \bop{\hilb \otimes \hilc_2}{},
\]
by \cite[Lemma~1.5(b)]{dCH85}, and the previous observation shows that
such an ampliation respects the matrix-space structure:
$( \id_{\bop{\hilb}{}} \uwkten m )( \mat{\hilc_1}{\opsp} ) %
\subseteq \mat{\hilc_2}{\opsp}$.
\end{rem}

The following type of ampliation was introduced by Lindsay and Wills
\cite{LiW01}.

\begin{prop}\label{prp:lift}
If $\wga \in \kbo{\hilc}{\opsp}{\opsq}$ then the
\emph{$\hilc$ lifting of~$\wga$} is the unique map
$\lift{\wga}{\hilc} : \mat{\hilc}{\opsp} \to \mat{\hilc}{\opsq}$
such that
\[
E^x ( \lift{\wga}{\hilc}( T ) ) E_y = \wga( E^x T E_y )%
\qquad\forall\, x, y \in \hilc, \ T \in \mat{\hilc}{\opsp}.
\]
The lifting is linear, $\hilc$ bounded and is completely bounded if
$\wga$ is. It satisfies the inequalities
$\| \lift{\wga}{\hilc} \| \leq \| \wga \|_\hb$ and
$\| \lift{\wga}{\hilc} \|_\cb \leq \| \wga \|_\cb$.
\end{prop}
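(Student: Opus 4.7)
The plan is to establish uniqueness (which forces linearity), construct the lifting separately in the finite- and infinite-dimensional cases, and finally derive the two norm bounds.

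Uniqueness follows because the slice maps $T \mapsto E^x T E_y$ jointly separate points of $\bop{\hilb \otimes \hilc}{}$: the identity $\langle u' \otimes x, T(u \otimes y)\rangle = \langle u', E^x T E_y u\rangle$ forces $S_1 = S_2$ whenever both satisfy the defining relation. For finite-dimensional $\hilc$ with orthonormal basis $e_1, \ldots, e_n$, set
\[
\lift{\wga}{\hilc}(T) := \sum_{i, j = 1}^n \wga(E^{e_i} T E_{e_j}) \otimes \dyad{e_i}{e_j} \in \mat{\hilc}{\opsq}.
\]
Expanding $E^x$ and $E_y$ in this basis verifies the defining relation; the standard bound $\|[A_{ij}]\| \leq n \max_{i,j} \|A_{ij}\|$ for $n \times n$ block-operator matrices, combined with $\|E^{e_i} T E_{e_j}\| \leq \|T\|$, then gives $\|\lift{\wga}{\hilc}(T)\| \leq n \|\wga\| \|T\|$.

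When $\hilc$ is infinite dimensional and $\wga$ is completely bounded, the plan is to fix an orthonormal basis $\{e_i\}_{i \in I}$ and approximate by the partial sums $S_F := \sum_{i, j \in F} \wga(E^{e_i} T E_{e_j}) \otimes \dyad{e_i}{e_j}$ indexed by finite $F \subset I$. The block matrix $[E^{e_i} T E_{e_j}]_{i, j \in F}$ is a compression of $T$, hence lies in $M_{|F|}(\opsp)$ with operator-space norm at most $\|T\|$; complete boundedness of $\wga$ therefore yields $\|S_F\| \leq \|\wga\|_{\cb} \|T\|$ uniformly in $F$. The scalar quantities $\langle u' \otimes v', S_F (u \otimes v)\rangle$ stabilise for $F$ large whenever $v, v'$ lie in the linear span of finitely many basis vectors, so the uniformly bounded net $\{S_F\}$ converges in the weak operator topology to some $\lift{\wga}{\hilc}(T)$ inheriting the same norm bound. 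Continuity then extends the defining relation from basis vectors to all $x, y \in \hilc$, and the resulting identity $E^x \lift{\wga}{\hilc}(T) E_y = \wga(E^x T E_y) \in \opsq$ certifies membership in $\mat{\hilc}{\opsq}$.

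The cb bound $\|\lift{\wga}{\hilc}\|_{\cb} \leq \|\wga\|_{\cb}$ is obtained by applying the above to an ampliation: under the identification $\mat{\hilc_1}{(\mat{\hilc}{\opsp})} = \mat{\hilc \otimes \hilc_1}{\opsp}$, uniqueness shows that the $\hilc_1$-ampliation of $\lift{\wga}{\hilc}$ coincides with $\lift{\wga}{\hilc \otimes \hilc_1}$, whose norm is at most $\|\wga\|_{\cb}$ by the construction just given. The main obstacle is the infinite-dimensional convergence argument: the naive $n \|\wga\|$ block-matrix bound blows up as $|F| \to \infty$, so complete boundedness of $\wga$ is indispensable for controlling the partial sums $S_F$ uniformly and for extracting a well-defined limit in $\mat{\hilc}{\opsq}$.
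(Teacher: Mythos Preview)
The paper does not actually prove this proposition; its ``proof'' is a one-line citation of \cite[Theorem~2.5]{Blt08}. So there is no argument in the paper to compare yours against.

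Your direct construction is correct and is essentially the standard one: an explicit block-matrix formula in finite dimensions with the crude estimate $\|[A_{ij}]\| \le n \max_{i,j}\|A_{ij}\|$, and in the completely bounded case the observation that $[\wga(E^{e_i}TE_{e_j})]_{i,j\in F}$ is the image under the ampliation $\wga^{(|F|)}$ of a compression of $T$, giving the uniform bound $\|\wga\|_\cb\|T\|$ and hence a well-defined weak-operator limit. One small point worth tightening: for the final inequality $\|\lift{\wga}{\hilc}\|_\cb \le \|\wga\|_\cb$ you identify the $\hilc_1$-ampliation of $\lift{\wga}{\hilc}$ with $\lift{\wga}{\hilc\otimes\hilc_1}$ and appeal to ``the construction just given''. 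Make sure that when $\hilc\otimes\hilc_1$ happens to be finite dimensional you invoke the compression estimate $\|\wga\|_\cb\|T\|$ rather than the cruder $(\dim(\hilc\otimes\hilc_1))\|\wga\|$ bound, since the latter is not uniform in $\dim\hilc_1$; your $S_F$ argument supplies this bound in either case, so the fix is purely presentational.
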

\begin{proof}
See \cite[Theorem~2.5]{Blt08}.
\end{proof}

\begin{prop}\label{prp:qrw}
For any $\wga \in \kbo{\hilc}{\opsp}{\mat{\hilc}{\opsp}}$ there exists
a unique family of maps
$\wga^{(n)} : \opsp \to \mat{\hilc^{\otimes n}}{\opsp}$ indexed by
$n \in \Z_+$, the \emph{quantum random walk} with
\emph{generator}~$\wga$, such that $\wga^{(0)} = \id_\opsp$ and
\[
E^x \wga^{(n + 1)}( a ) E_y = \wga^{(n)}( E^x \wga( a ) E_y )\qquad%
\forall\, x, y \in \hilc, \ a \in \opsp, \ n \in \Z_+.
\]
These maps are linear, $\hilc$ bounded and completely bounded if
$\wga$ is; if $n \geq 1$ then
$\| \wga^{(n)} \|_\hb \leq \| \wga \|_\hb^n$ and
$\| \wga^{(n)} \|_\cb \leq\| \wga \|_\cb^n$.
\end{prop}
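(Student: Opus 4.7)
The plan is to proceed by induction on $n$, using the lifting machinery of Proposition~\ref{prp:lift} together with the natural identification $\mat{\hilc}{(\mat{\hilc^{\otimes n}}{\opsp})} = \mat{\hilc^{\otimes(n+1)}}{\opsp}$ recorded at the start of Section~\ref{sec:prelim}.

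For the base case $n=0$ one simply sets $\wga^{(0)} := \id_\opsp$, which trivially satisfies all the stated properties. For the inductive step, given the map $\wga^{(n)} : \opsp \to \mat{\hilc^{\otimes n}}{\opsp}$, I would define
\[
\wga^{(n+1)} := \lift{\wga^{(n)}}{\hilc} \circ \wga :
\opsp \to \mat{\hilc}{\opsp} \to
\mat{\hilc}{(\mat{\hilc^{\otimes n}}{\opsp})} = \mat{\hilc^{\otimes(n+1)}}{\opsp}.
\]
The characterising property of the $\hilc$ lifting, namely $E^x \lift{\wga^{(n)}}{\hilc}( T ) E_y = \wga^{(n)}( E^x T E_y )$, applied at $T = \wga( a )$ immediately yields the recursion $E^x \wga^{(n+1)}( a ) E_y = \wga^{(n)}( E^x \wga( a ) E_y )$ demanded by the statement.

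Uniqueness follows from the slice-map observation in the remark following Proposition~\ref{prp:lift}: any element of a matrix space is determined by its slices $T \mapsto E^x T E_y$ as $x$, $y$ range over $\hilc$, so the recursion specifies $\wga^{(n+1)}(a)$ uniquely once $\wga^{(n)}$ has been fixed, and hence by induction the entire family.

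Linearity and $\hilc$ boundedness (respectively complete boundedness) are inherited at each stage from $\wga$ and the corresponding properties of the lifting. For the norm estimates, I would separate the two cases in the definition of $\|\cdot\|_\hb$. In the completely bounded case, $\| \wga^{(n+1)} \|_\cb \leq \| \lift{\wga^{(n)}}{\hilc} \|_\cb \, \| \wga \|_\cb \leq \| \wga^{(n)} \|_\cb \, \| \wga \|_\cb$, and the bound $\| \wga^{(n)} \|_\cb \leq \| \wga \|_\cb^n$ follows by induction. When $\dim \hilc < \infty$, the lifting inequality gives $\| \lift{\wga^{(n)}}{\hilc} \| \leq \| \wga^{(n)} \|_\hb = (\dim \hilc) \| \wga^{(n)} \|$, whence $\| \wga^{(n+1)} \|_\hb = (\dim \hilc) \| \wga^{(n+1)} \| \leq (\dim \hilc)^2 \| \wga^{(n)} \| \, \| \wga \| = \| \wga^{(n)} \|_\hb \, \| \wga \|_\hb$, and the bound again follows by induction.

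The proof is essentially bookkeeping; the only subtlety is to keep straight the tensor-order convention so that the outer $\hilc$ introduced by the lifting at step $n+1$ is identified with the new factor in $\hilc^{\otimes(n+1)}$, and that the $\hb$ norm is measured with respect to the original $\hilc$ rather than the growing tensor power. No deeper obstacle is expected beyond invoking Proposition~\ref{prp:lift} correctly.
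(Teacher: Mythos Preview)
Your proof is correct and follows exactly the paper's approach: the paper defines $\wga^{(n+1)} := (\wga^{(n)} \matten \id_{\bop{\hilc}{}}) \circ \wga$ via Proposition~\ref{prp:lift}, which is precisely your $\lift{\wga^{(n)}}{\hilc} \circ \wga$, and then refers to \cite[Theorem~2.7]{Blt08} for the $\hb$-norm inequality that you spell out explicitly. Your extra details on uniqueness and the finite-dimensional norm bookkeeping are fine and in the same spirit.
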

\begin{proof}
Given $\wga^{(n)}$, use Proposition~\ref{prp:lift} to let
$\wga^{(n + 1)} := %
( \wga^{(n)} \matten \id_{\bop{\hilc}{}} ) \circ \wga$.
For the first inequality, see \cite[Theorem~2.7]{Blt08}; the second
is immediate.
\end{proof}

\subsection{Toy and Boson Fock space}

\begin{notn}
Let $\mmul$ be a Hilbert space containing the distinguished unit
vector~$\vac$ and let $\mul := \mmul \ominus \C \vac$, the orthogonal
complement of $\C \vac$ in $\mmul$. Define
$\widehat{x} := \vac + x \in \mmul$ for any $x \in \mul$.
\end{notn}

\begin{defn}
The \emph{toy Fock space over $\mul$} is
$\bebe := \bigotimes_{n = 0}^\infty \mmul_{(n)}$, where
$\mmul_{(n)} := \mmul$ for all $n \in \Z_+$, with respect to the
stabilising sequence $(\vac_{(n)} := \vac)_{n = 0}^\infty$; the suffix
$(n)$ is used to indicate the relevant copy of $\mmul$. Note that
$\bebe = \bebe_{n[} \otimes \bebe_{[n}$, where
$\bebe_{n[} := \bigotimes_{m = 0}^{n - 1}\mmul_{(m)}$ and
$\bebe_{[n} := \bigotimes_{m = n}^\infty\mmul_{(m)}$, for all
$n \in \Z_+$.
\end{defn}

\begin{defn}
Let $\fock$ be the Boson Fock space over $\elltwo$, the Hilbert space
of square-integrable $\mul$-valued functions on the half line. Recall
that~$\fock$ may be considered as the completion of $\evecs$, the
linear span of \emph{exponential vectors}~$\evec{f}$ labelled by
$f \in \elltwo$, with respect to the inner product
\[
\langle \evec{f}, \evec{g} \rangle_\fock := \exp\Bigl( %
\int_0^\infty \langle f( t ), g( t ) \rangle_\mul \std t %
\Bigr) \qquad\forall\, f, g \in \elltwo.
\]
\end{defn}

The following gives sense to the idea that the toy space $\bebe$
approximates $\fock$.

\begin{prop}
For all $\tau > 0$ there is a unique co-isometry
$D_\tau : \fock \to \bebe$ such that
\[
D_\tau \evec{f} = \bigotimes_{n = 0}^\infty \widehat{f( n; \tau )}, %
\quad \mbox{where} \quad %
f(n; \tau) := \tau^{-1 / 2} \int_{n \tau}^{(n + 1) \tau}f( t )\std t,
\]
for all $f \in \elltwo$. Furthermore, $D_\tau^* D_\tau \to \id_\fock$
strongly as $\tau \to {0+}$. (See \cite{Blt07}.)
\end{prop}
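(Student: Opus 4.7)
The plan is to construct $D_\tau$ as the adjoint of an explicit isometric embedding $V_\tau : \bebe \to \fock$, identify its action on exponential vectors by a direct inner product computation, and then deduce strong convergence of $D_\tau^* D_\tau$ from a norm calculation.

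For each $n \in \Z_+$ let $\fock_n$ denote the Boson Fock space over $L^2( [ n \tau, (n+1) \tau [; \mul )$, so that $\fock = \bigotimes_{n=0}^\infty \fock_n$ with respect to the vacua $\Omega_n$. First I would define an isometry $V_\tau^{(n)} : \mmul_{(n)} \to \fock_n$ sending $\vac_{(n)}$ to $\Omega_n$ and each $x \in \mul$ to the one-particle vector $\tau^{-1/2} x \indf{[ n\tau, (n+1)\tau [}$: orthogonality of the vacuum and one-particle subspaces of $\fock_n$ together with $\| \tau^{-1/2} x \indf{[ n\tau, (n+1)\tau [} \|^2 = \| x \|^2$ gives isometricity. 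Since each $V_\tau^{(n)}$ maps the stabilising vector to the stabilising vector, the infinite tensor product $V_\tau := \bigotimes_n V_\tau^{(n)}$ is a well-defined isometry $\bebe \to \fock$, and setting $D_\tau := V_\tau^*$ produces the required co-isometry.

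To identify the action of $D_\tau$ on an exponential vector, decompose $\evec{f} = \bigotimes_n \evec{f \indf{[ n \tau, (n+1) \tau [}}$ and compute, for $\hat{x}_n = \vac + x_n \in \mmul$,
\[
\bigl\langle V_\tau^{(n)} \hat{x}_n, \evec{f \indf{[ n \tau, (n+1) \tau [}} \bigr\rangle_{\fock_n} = 1 + \langle x_n, f( n; \tau ) \rangle_\mul = \bigl\langle \hat{x}_n, \widehat{f( n; \tau )} \bigr\rangle_\mmul,
\]
the two summands arising from pairing vacuum with vacuum and one-particle with one-particle. Taking the product over $n$ yields $D_\tau \evec{f} = \bigotimes_n \widehat{f( n; \tau )}$, and uniqueness of $D_\tau$ is immediate from totality of the exponential vectors.

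For the strong convergence $D_\tau^* D_\tau \to \id_\fock$, since $D_\tau^* D_\tau$ is an orthogonal projection it suffices to prove $\| D_\tau \evec{f} \|^2 \to \| \evec{f} \|^2$ for each $f \in \elltwo$, after which the uniform bound $\| D_\tau^* D_\tau \| \leq 1$ extends convergence to arbitrary linear combinations of exponential vectors and thence to all of $\fock$. Writing $a_n := \| f( n; \tau ) \|_\mul^2$, this becomes $\prod_n ( 1 + a_n ) \to \exp( \| f \|^2 )$, equivalently $\sum_n \log( 1 + a_n ) \to \| f \|^2$. I would combine two ingredients: first, $\sum_n a_n = \| c_\tau \|_{L^2}^2 \to \| f \|_{L^2}^2$, where $c_\tau$ is the conditional expectation of $f$ onto step functions for the partition $\{ [ n \tau, (n+1) \tau [ \}_{n \geq 0}$, using $L^2$-density of step functions; second, the Cauchy-Schwarz bound $a_n \leq \int_{n\tau}^{(n+1)\tau} \| f(t) \|_\mul^2 \std t$ combined with absolute continuity of the Lebesgue integral forces $\max_n a_n \to 0$. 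The elementary estimate $0 \leq x - \log( 1 + x ) \leq \half x^2$ for $x \geq 0$ then sandwiches $\sum_n ( a_n - \log( 1 + a_n ) )$ between $0$ and $\half ( \max_n a_n ) \sum_n a_n \to 0$. This final step is the main technical point: while $\sum_n a_n \leq \| f \|^2$ via Cauchy-Schwarz is trivial, the inequality is typically strict at fixed $\tau > 0$, so both the quantitative decay $\max_n a_n \to 0$ and the $L^2$-approximation by step functions must be exploited in tandem to close the gap in the limit.
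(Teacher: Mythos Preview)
The paper does not actually prove this proposition; it simply cites \cite{Blt07} and moves on. So there is no proof in the paper to compare against.

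Your argument is correct and self-contained. The construction of $D_\tau$ as the adjoint of the isometric embedding $V_\tau = \bigotimes_n V_\tau^{(n)}$ is the natural one, and your inner-product computation identifying $D_\tau \evec{f}$ is clean. For the strong convergence, the reduction to $\|D_\tau \evec{f}\|^2 \to \|\evec{f}\|^2$ via the projection identity $\|P_\tau \xi - \xi\|^2 = \|\xi\|^2 - \|P_\tau \xi\|^2$ is exactly right, and the two-ingredient estimate (conditional-expectation convergence $\sum_n a_n \to \|f\|^2$ plus $\max_n a_n \to 0$ from absolute continuity, combined through $0 \leq x - \log(1+x) \leq \tfrac{1}{2} x^2$) closes the argument without gaps. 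The only point worth making explicit is that $\bigotimes_n \widehat{f(n;\tau)}$ is indeed a well-defined element of $\bebe$: this holds because $\langle \vac, \widehat{f(n;\tau)} \rangle = 1$ for all $n$ and $\sum_n \|f(n;\tau)\|^2 \leq \|f\|^2 < \infty$, so the infinite product of norms converges.
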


\subsection{QS cocycles}

\begin{defn}
An \emph{$\ini$ process} $X$ is a family $\{ X_t \}_{t \in \R_+}$ of
linear operators in $\ini \otimes \fock$, such that the domain of each
operator contains $\ini \algten \evecs$ and the map
$t \mapsto X_t u \evec{f}$ is weakly measurable for all $u \in \ini$
and $f \in \elltwo$; this process is \emph{adapted} if
\[
\langle u \evec{f}, X_t v \evec{g} \rangle_{\ini \otimes \fock} = %
\langle u \evec{\indf{[ 0, t [} f}, %
X_t v \evec{\indf{[ 0, t [} g} \rangle_{\ini \otimes \fock} %
\langle \evec{\indf{[ t, \infty [} f}, %
\evec{\indf{[ t, \infty [} g} \rangle_\fock
\]
for all $u$,~$v \in \ini$, $f$,~$g \in \elltwo$ and $t \in \R_+$. (As
is conventional, the tensor-product sign is omitted between elements
of $\ini$ and exponential vectors.)

A \emph{mapping process} $k$ is a family
$\{ k_\cdot( a ) \}_{a \in \opsp}$ of $\ini$ processes such that the
map $a \mapsto k_t( a )$ is linear for all $t \in \R_+$; this process
is \emph{adapted} if each $k_\cdot( a )$ is, it is
\emph{strongly regular} if
\[
k_t( \cdot ) E_{\evec{f}} \in %
\bopp( \opsp; \bop{\ini}{\ini \otimes \fock})%
\qquad \forall\, f \in \elltwo,\ t \in \R_+,
\]
with norm locally uniformly bounded as a function of $t$, and it is
\emph{CB regular} if these two conditions hold with ``bounded''
replaced by ``completely bounded''.
\end{defn}

\begin{thm}
For any $\cga \in \kbo{\mmul}{\opsp}{\mat{\mmul}{\opsp}}$ there exists
a unique strongly regular adapted mapping process $k^\cga$, the
\emph{QS cocycle generated by~$\cga$}, such that
\[
\langle u \evec{f}, ( k^\cga_t( a ) - %
a \otimes \id_\fock ) v \evec{g} \rangle = %
\int_0^t \langle u \evec{f}, k^\cga_s( E^{\widehat{f( s )}} \cga( a ) %
E_{\widehat{g( s )}}) v \evec{g} \rangle \std s
\]
for all $u$,~$v \in \ini$, $f$,~$g \in \elltwo$, $a \in \opsp$ and
$t \in \R_+$. If $\cga$ is completely bounded then $k$ is CB regular.
\end{thm}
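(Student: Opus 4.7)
The plan is a Picard iteration in the style of Lindsay and Wills~\cite{LiW01}. Define $k^{(0)}_t( a ) := a \otimes \id_\fock$ and, for $n \in \Z_+$, let $k^{(n + 1)}$ be the mapping process determined by the matrix-element prescription
\[
\langle u \evec{f}, k^{(n + 1)}_t( a ) v \evec{g} \rangle :=
\langle u, a v \rangle \langle \evec{f}, \evec{g} \rangle +
\int_0^t \langle u \evec{f}, k^{(n)}_s\bigl(
E^{\widehat{f( s )}} \cga( a ) E_{\widehat{g( s )}}
\bigr) v \evec{g} \rangle \std s
\]
for all $u, v \in \ini$, $f, g \in \elltwo$, $a \in \opsp$ and $t \in \R_+$, provided the required boundedness on $\ini \algten \evecs$ is verified at each stage.

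The crux is a norm bound for the increment $k^{(n + 1)}_t( a ) - k^{(n)}_t( a )$. Unfolding the recursion $n$ times leaves $k^{(0)}$ applied to an iterated slice of $\cga$; since slices of elements of $\mat{\mmul}{\opsp}$ belong to $\opsp$, each application of $\cga$ followed by slicing contributes at most a factor $\| \cga \| \| \widehat{f( s_j )} \| \| \widehat{g( s_j )} \|$. Integrating over the simplex $0 < s_1 < \cdots < s_n < t$ and applying Cauchy--Schwarz to the square-integrable functions $\widehat{f( \cdot )}$, $\widehat{g( \cdot )}$ then yields a matrix-element bound of the form $( n! )^{-1} C( t, f, g )^n \| \cga \|^n \| a \| \| u \| \| v \| \| \evec{f} \| \| \evec{g} \|$. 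The factorial makes $( k^{(n)}_t( a ) v \evec{g} )_{n \geq 0}$ Cauchy in $\ini \otimes \fock$, locally uniformly in $t$ and uniformly on bounded exponential sets; denote its limit by $k^\cga_t( a ) v \evec{g}$. Because the bound is affine in~$a$, the same estimate furnishes strong regularity, namely $k^\cga_t( \cdot ) E_{\evec{f}} \in \bopp( \opsp; \bop{\ini}{\ini \otimes \fock})$ with norm locally uniformly bounded in~$t$.

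Adaptedness is verified by induction: the integrand depends on $f$, $g$ only through the pointwise values $f( s )$, $g( s )$ at $s \leq t$, and the factorisation $\evec{f} = \evec{\indf{[ 0, t [} f} \otimes \evec{\indf{[ t, \infty [} f}$ propagates the adaptedness identity step to step, surviving the limit. Passing to the limit in the recursion shows that $k^\cga$ satisfies the stated weak integral equation. Uniqueness is Gronwall-style: the difference of two solutions obeys a homogeneous version of the recursion, and iterating the same estimate $n$ times forces it to vanish by factorial suppression. If $\cga$ is completely bounded then Proposition~\ref{prp:lift} ensures that ampliations $\id_{\bop{\hilc'}{}} \uwkten k^{(n)}_s$ preserve the matrix-space structure with the cb-norm version of the same bound, and the whole argument goes through at the matrix level to yield CB regularity.

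The principal obstacle will be the bookkeeping in the iterated estimate: the exponential-vector norms $\| \widehat{f( \cdot )} \|$ and $\| \widehat{g( \cdot )} \|$ are only square-integrable and so cannot be controlled by a supremum, forcing the simplex integration to proceed via Cauchy--Schwarz so that the $n!$ from the simplex volume outweighs the accumulating $L^2$ factors. Once these factorial-versus-$L^2$-growth estimates are in place, existence, uniqueness, adaptedness, strong regularity and CB regularity all drop out together.
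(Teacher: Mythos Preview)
The paper itself proves this statement only by citation: ``See \cite{LiW01}; the proof contained there is valid for any operator space.'' Your sketch is precisely the Picard iteration of that reference, so in overall strategy you are on the same track as the paper.

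There is, however, a genuine gap at the point where you pass from the matrix-element estimate to the claim that $\bigl( k^{(n)}_t( a ) v \evec{g} \bigr)_{n \geq 0}$ is Cauchy in $\ini \otimes \fock$. Your bound has the form
\[
\bigl| \langle u \evec{f}, ( k^{(n + 1)}_t( a ) - k^{(n)}_t( a ) ) v \evec{g} \rangle \bigr|
\leq \frac{C( t, f, g )^n}{n!} \| \cga \|^n \| a \| \, \| u \evec{f} \| \, \| v \evec{g} \|,
\]
but the constant $C( t, f, g )$ depends on $f$ through $\int_0^t \| \widehat{f( s )} \|^2 \std s$, not merely through $\| u \evec{f} \|$. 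When you take the supremum over test vectors $u \evec{f}$ of unit norm, $\| f \|$ is unbounded (choose $\| u \|$ small), so $C( t, f, g )$ is unbounded and the supremum is $+\infty$. A matrix-element estimate with a constant that depends on the test exponential in this way therefore does not yield a norm bound on $( k^{(n + 1)}_t( a ) - k^{(n)}_t( a ) ) v \evec{g}$, and the inductive definition via a sesquilinear-form prescription is already suspect for the same reason.

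The remedy, and what \cite{LiW01} actually does, is to work directly with Hilbert-space norms using the fundamental estimate for quantum stochastic integrals: an inequality of the type
\[
\Bigl\| \int_0^t \Lambda_\cga( k_\cdot )( a ) \std s \, v \evec{g} \Bigr\|^2
\leq C_g \int_0^t \bigl\| k_s\bigl( E^{\widehat{g( s )}} \cga( a ) E_{\widehat{g( s )}} \bigr) v \evec{g} \bigr\|^2 \std s
\]
(schematically). This gives an iterable bound on $\| k^{(n + 1)}_t( \cdot ) E_{\evec{g}} - k^{(n)}_t( \cdot ) E_{\evec{g}} \|$ involving only the fixed $g$, and the simplex volume then produces the $( n! )^{-1}$ needed for convergence, strong regularity, uniqueness and (with the cb version of the estimate) CB regularity. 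Once you replace the matrix-element iteration by this vector-norm iteration, the rest of your outline goes through as written.
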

\begin{proof}
See \cite{LiW01}; the proof contained there is valid for any operator
space.
\end{proof}

QS cocycles are the correct limit objects for quantum random walks, as
the next section makes clear.

\subsection{Random-walk convergence}

\begin{defn}
If $\tau > 0$ and $\wga \in \kbo{\mmul}{\opsp}{\mat{\mmul}{\opsp}}$
then the \emph{embedded walk} with \emph{generator} $\wga$ and
\emph{step size} $\tau$ is the mapping process $K^{\wga, \tau}$ such
that
\[
K^{\wga, \tau}_t( a ) := (\id_\ini \otimes D_\tau)^* %
( \wga^{(n)}( a ) \otimes \id_{\bebe_{[n}} ) %
( \id_\ini \otimes D_\tau ) \qquad %
\mbox{if }t \in [n \tau, (n + 1) \tau[
\]
for all $a \in \opsp$ and $t \in \R_+$.
\end{defn}

\begin{defn}
If $\tau > 0$ and $\wga \in \kbo{\mmul}{\opsp}{\mat{\mmul}{\opsp}}$
then the modification
$\mmodf{\wga}{\tau} \in \kbo{\mmul}{\opsp}{\mat{\mmul}{\opsp}}$ is
defined by setting
\[
\mmodf{\wga}{\tau}( a ) := %
( \tau^{-1 / 2} \Delta^\perp + \Delta ) %
( \wga( a ) - a \otimes \id_\mmul ) %
( \tau^{-1 / 2} \Delta^\perp + \Delta ) \qquad \forall\, a \in \opsp,
\]
where $\Delta$ is the orthogonal projection from $\ini \otimes \mmul$
onto $\ini \otimes \mul$ and
$\Delta^\perp := \id_{\ini \otimes \mmul} - \Delta$. Note that
$\mmodf{\wga}{\tau}$ is completely bounded if $\wga$ is. In
block-matrix form,
\[
\mbox{if } \wga = \begin{bmatrix}
\wga^0_0 & \wga_\times^0\\[1ex]
\wga_0^\times & \wga_\times^\times
\end{bmatrix} \quad \mbox{then} \quad %
\mmodf{\wga}{\tau}(a) = \begin{bmatrix}
\tau^{-1}(\wga_0^0(a) - a) & \tau^{-1/2} \wga_\times^0(a) \\[1ex]
\tau^{-1/2} \wga_0^\times(a) & %
\wga_\times^\times(a) - a \otimes \id_\mul
\end{bmatrix}.
\]
\end{defn}

\begin{rem}\label{rem:conv}
For a sequence $(\wga_n)$ in $\kbo{\hilc}{\opsp}{\opsq}$, recall that
\begin{multline*}
\wga_n \matten \id_{\bop{\hilc}{}} \to 0 \mbox{ strongly} \\[0.5ex]
\mbox{if and only if }\smash[t]{\left\{
\begin{array}{ll}
\wga_n \to 0 \mbox{ strongly} & %
\mbox{when } \dim \hilc < \infty, \\[0.5ex]
\wga_n \to 0 \mbox{ in $\cb$ norm} & %
\mbox{when } \dim \hilc = \infty,
\end{array}\right.}
\end{multline*}
by \cite[Proposition~2.11 and Lemma~2.13]{Blt08}.
\end{rem}

The following is a quantum analogue of Donsker's invariance principle.

\begin{thm}\label{thm:qrw}
Let $\tau_n > 0$ and
$\wga_n$,~$\cga \in \kbo{\mmul}{\opsp}{\mat{\mmul}{\opsp}}$
be such that
\[
\tau_n \to {0+} \qquad \mbox{and} \qquad %
\lift{\mmodf{\wga_n}{\tau_n}}{\mmul} \to \lift{\cga}{\mmul}%
\mbox{ strongly}
\]
(\ie pointwise in norm) as $n \to \infty$. If $f \in \elltwo$ and
$T \in \R_+$ then
\begin{equation}\label{eqn:conv}
\lim_{n \to \infty} \sup_{t \in [0,T]} %
\| K^{\wga_n, \tau_n}_t( a ) E_{\evec{f}} - %
 k^\cga_t( a ) E_{\evec{f}} \| = 0 \qquad \forall\, a \in \opsp.
\end{equation}
If, further, $\| \mmodf{\wga_n}{\tau_n} - \cga \|_\mmulb \to 0$ as
$n \to \infty$ then
\begin{equation}\label{eqn:sconv}
\lim_{n\to\infty}\sup_{t \in [0,T]} %
\| K^{\wga_n, \tau_n}_t( \cdot ) E_{\evec{f}} - %
k^\cga_t( \cdot ) E_{\evec{f}} \|_\mmulb = 0;
\end{equation}
when $\wga_n$ and $\cga$ are completely bounded, the same implication
holds if $\| \cdot \|_\mmulb$ is replaced by~$\| \cdot \|_\cb$.
\end{thm}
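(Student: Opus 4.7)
The plan is to pass from the embedded walk to the QS cocycle through a discrete--to--continuous comparison, exploiting the way that $\mmodf{\wga_n}{\tau_n}$ encodes the correct scaling across the decomposition $\id_{\ini\otimes\mmul} = \Delta + \Delta^\perp$. First I would fix $f$,~$g \in \elltwo$ and $a \in \opsp$, and unpack $K^{\wga_n,\tau_n}_t(a)\,E_{\evec{f}}$ using the definition of the embedded walk together with the identity $D_\tau \evec{f} = \bigotimes_k \widehat{f(k;\tau)}$. For $t \in [n\tau_n,(n+1)\tau_n[$ this reduces the inner product $\langle u\evec{f}, K^{\wga_n,\tau_n}_t(a) v\evec{g}\rangle$ to a finite composition whose factors are slices $E^{\widehat{f(j;\tau_n)}} \wga_n(\cdot) E_{\widehat{g(j;\tau_n)}}$, weighted by the exponential-vector normalisation arising from $D_\tau D_\tau^* = \id_\bebe$.

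Second, I would derive a discrete Duhamel identity by telescoping across the $n$ steps. Writing $\wga_n(a) = a \otimes \id_\mmul + (\wga_n(a) - a \otimes \id_\mmul)$ and inserting $\Delta + \Delta^\perp$ on either side of the difference shows that the per-step increment is exactly the slice of $\mmodf{\wga_n}{\tau_n}(a)$ with the factors $\tau_n$ and $\tau_n^{1/2}$ attached in the correct blocks; crucially, slicing through $\Delta^\perp$ against $\widehat{f(j;\tau_n)}$ multiplies by $\tau_n^{1/2}f(j;\tau_n) = \int_{j\tau_n}^{(j+1)\tau_n} f(s)\std s$, so the $\tau_n^{-1/2}$ from $\mmodf{\wga_n}{\tau_n}$ and the $\tau_n^{1/2}$ from the off-diagonal slices combine to give genuine Riemann sums. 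Telescoping then expresses $K^{\wga_n,\tau_n}_t(a)\,E_{\evec{f}}$ as a discrete sum of the form
\[
a\, E_{\evec{f}} + \sum_{j = 0}^{n-1} K^{\wga_n,\tau_n}_{j\tau_n}\bigl(E^{\widehat{f(j;\tau_n)}}\mmodf{\wga_n}{\tau_n}(a) E_{\widehat{g(j;\tau_n)}}\bigr)\tau_n\, E_{\evec{g}} + \text{remainder},
\]
which is the discrete analogue of the QS integral equation defining $k^\cga$.

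Third, I would subtract the integral equation for $k^\cga_t(a)\,E_{\evec{f}}$ from this discrete identity, bound the remainder (which is controlled by $\tau_n \|\mmodf{\wga_n}{\tau_n}\|_\mmulb$ times the growth of $K^{\wga_n,\tau_n}$, itself uniformly bounded on $[0,T]$ by Proposition~\ref{prp:qrw}), and apply the convergence hypothesis on $\lift{\mmodf{\wga_n}{\tau_n}}{\mmul}$ together with the dominated convergence theorem inside the Riemann sum. A discrete Gronwall argument then delivers the uniform estimate~(\ref{eqn:conv}). The main obstacle will be the strengthened conclusion~(\ref{eqn:sconv}), where one needs uniformity in $a$ with respect to the $\mmulb$ norm: here one cannot evaluate at a single $a$, and must propagate $\mmulb$-norm estimates through the telescoping, which is precisely the point at which Proposition~\ref{prp:lift} and Remark~\ref{rem:conv} are needed to translate strong $\mmul$-lifted convergence into either $\|\cdot\|$ or $\|\cdot\|_\cb$ convergence of $\mmodf{\wga_n}{\tau_n}$ itself, according to whether $\dim\mmul$ is finite or infinite.
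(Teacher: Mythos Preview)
The paper does not prove this theorem: its entire proof is the one-line citation ``See \cite[Theorem~7.6]{Blt08}.'' Theorem~\ref{thm:qrw} is imported wholesale from the companion paper \cite{Blt08}, and the present paper merely restates it for use in Section~\ref{sec:results}. So there is no proof here against which your proposal can be compared.

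That said, your outline is broadly the right shape for how such a result is established in \cite{Blt08}: the argument there does proceed by comparing iterates of the walk with Picard iterates of the QS cocycle, and the scaling identity you identify---that slicing $\Delta^\perp$ against $\widehat{f(j;\tau)}$ absorbs exactly the $\tau^{-1/2}$ in $\mmodf{\wga}{\tau}$---is the crux of the matter. A couple of points in your sketch would need tightening. First, your displayed telescoped sum mixes $E_{\evec{f}}$ and $E_{\evec{g}}$ in a way that does not type-check; you should either work at the level of the column maps $K^{\wga_n,\tau_n}_t(\cdot)E_{\evec{f}}$ throughout, or else carry the inner-product form consistently. Second, the claim that $K^{\wga_n,\tau_n}$ is uniformly bounded on $[0,T]$ ``by Proposition~\ref{prp:qrw}'' is not immediate: that proposition gives $\|\wga_n^{(m)}\|_\hb \le \|\wga_n\|_\hb^m$, and with $m \sim T/\tau_n$ you need $\|\wga_n\|_\hb \le 1 + C\tau_n$ to conclude, which must be extracted from the boundedness of $\mmodf{\wga_n}{\tau_n}$ via the inverse scaling. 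These are exactly the sort of details that \cite{Blt08} handles, so if you are reconstructing the proof you should consult that reference rather than the present paper.
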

\begin{proof}
See \cite[Theorem~7.6]{Blt08}.
\end{proof}

\begin{notn}
The conclusion (\ref{eqn:conv}) will be abbreviated to
$K^{\wga_n, \tau_n}\to k^\cga$; the stronger conclusion
(\ref{eqn:sconv}) will be denoted by
$K^{\wga_n, \tau_n} \tto{\mmulb} k^\cga$, or by
$K^{\wga_n, \tau_n} \tto{\cb} k^\cga$ if the completely bounded
version holds.
\end{notn}

\begin{exmp}\label{eg:hp}
Choose self-adjoint operators $\hsys \in \bop{\ini}{}$ and
$\hpar \in \bop{\mmul}{}$, and let
$V \in \bop{\ini}{\ini \otimes \mul}$. Define
$\htot(\tau) := %
\hsys \otimes \id_\mmul + \id_\ini \otimes \hpar + \hint(\tau)$
for all $\tau > 0$, where
\[
\hint(\tau) := \tau^{-1 / 2} ( Q V E^\vac + E_\vac V^* Q^* ) = %
\tau^{-1 / 2} \begin{bmatrix} 0 & V^*\\[1ex] V & 0 \end{bmatrix}
\]
and $Q : \ini \otimes \mul \hookrightarrow \ini \otimes \mmul$ is
the natural embedding map. If
$W(\tau) := \exp( -\I \tau \htot(\tau) )$ then
\begin{multline*}
( \tau^{-1/2} \Delta^\perp + \Delta ) %
( W(\tau) - I_{\ini \otimes \mmul} ) %
( \tau^{-1/2} \Delta^\perp + \Delta ) \\
 = -\I \begin{bmatrix}
\hsys + \mu \id_\ini & V^* \\[1ex] V & 0
\end{bmatrix} - %
\half \begin{bmatrix} V^* V & 0\\[1ex] 0 & 0 \end{bmatrix}
+ O(\tau)
\end{multline*}
as $\tau \to {0+}$, where
$\mu := \langle \vac, \hpar \, \vac \rangle_\mmul$. Thus if
$\tau_n \to {0+}$ and
\[
\wga(\tau) : \bop{\ini}{} \to \bop{\ini\otimes\mmul}{};\ %
a \mapsto ( a \otimes \id_\mmul ) W(\tau)
\]
then, by Theorem~\ref{thm:qrw},
$K^{\wga(\tau_n), \tau_n} \tto{\cb} k^\cga$, where
$\cga( a ) := ( a \otimes \id_\mmul ) F$ for all $a \in \bop{\ini}{}$
and
\[
F := \begin{bmatrix}
{-\I}( \hsys + \mu \id_\ini ) - \half V^* V & {-\I} V^* \\[1ex]
{-\I} V & 0
\end{bmatrix} \in \bop{\ini \otimes \mmul}{}.
\]
The cocycle $k^\cga$ obtained in the limit is an evolution of
Hudson--Parthasarathy type: if $U_t := k^\cga_t( \id_\ini )$ for all
$t \in \R_+$ then $U_t$ is unitary, by \cite[Proof of Theorem~7.1 and
Theorem~7.5]{LiW00}, and the adapted $\ini$~process $U$ satisfies
the quantum Langevin equation
\begin{equation}\label{eqn:hpqsde}
U_0 = \id_{\ini \otimes \fock}, \qquad %
\rd U_t = \rd \Lambda_F( t )U_t.
\end{equation}
(The cocycle $k^\cga$ may be recovered from $U$ by setting
$k^\cga_t( a ) := ( a \otimes \id_\fock )U_t$.)
\end{exmp}

\begin{exmp}\label{eg:eh}
Let $\tau_n$ and $W(\tau)$ be as in Example~\ref{eg:hp} and let
\[
\wga(\tau) : \bop{\ini}{} \to \bop{\ini \otimes \mmul}{} ; \ %
a \mapsto W(\tau)^* ( a \otimes \id_\mmul ) W(\tau).
\]
Then $K^{\wga(\tau_n), \tau_n} \tto{\cb} k^\cga$, by
Theorem~\ref{thm:qrw}, where
\[
\cga( a ) := \begin{bmatrix}
{-\I} [ a, \hsys ] + V^* ( a \otimes \id_\mul ) V - %
\half \{ a, V^* V \} & %
{-\I} a V^* + \I V^* ( a \otimes \id_\mul ) \\[1ex]
{-\I} ( a \otimes \id_\mul ) V + \I V a & 0
\end{bmatrix}
\]
for all $a \in \bop{\ini}{}$, with the commutator
$[ x, y ] := x y - y x$ and the anticommutator
$\{ x, y \} := x y + y x$. Here, the limit cocycle is an inner
Evans--Hudson flow: if $U$ is the unitary adapted $\ini$ process which
satisfies (\ref{eqn:hpqsde}) then, by \cite[Theorem~7.4]{LiW00},
\[
k^\cga_t( a ) = U_t^* ( a \otimes \id_\fock ) U_t \qquad %
\forall\, a \in \bop{\ini}{}, \ t \in \R_+.
\]
\end{exmp}

\begin{rem}
In Example~\ref{eg:hp}, if $\mul$ has orthonormal basis
$\{ e_j \}_{j=1}^N$, where $N$ may be infinite, then
\[
\hint(\tau) = \tau^{-1/2} \sum_{j=1}^N %
( V_j \otimes \dyad{e_j}{\vac} + V^*_j \otimes \dyad{\vac}{e_j} ),
\]
where $V_j := E^{e_j} V$ for all $j$; the series converges strongly if
$N = \infty$. For $N < \infty$, this is the dipole-interaction
Hamiltonian used by Attal and Joye in \cite{AtJ07}. With the
convention that $e_0 := \vac$, the quantum Langevin equation
(\ref{eqn:hpqsde}) takes the form
\[
U_0 = \id_{\ini \otimes \fock}, \qquad %
\rd U_t = \sum_{\alpha, \beta = 0}^N %
( F^\alpha_\beta \otimes \id_\fock ) U_t %
\std\Lambda^\beta_\alpha( t ),
\]
where $F^\alpha_\beta := E^\alpha F E_\beta$ for all $\alpha$,
$\beta$.
\end{rem}

\section{Thermal walks}\label{sec:results}

\begin{notn}
Let $\State$ be a \emph{density matrix} which acts on the Hilbert
space $\statesp$, \ie a positive operator with unit trace, and suppose
that the corresponding normal state
$\state : X \mapsto \tr( \State X )$ on the von~Neumann algebra
$\bop{\statesp}{}$ is faithful (so $\statesp$ is separable). Fix an
orthonormal basis $\{ e_j \}_{j = 0}^N$ of $\statesp$, where
$N \in \Z_+$ or $N = \infty$, such that
\[
\State = \sum_{j =0}^N \lambda_j \dyad{e_j}{e_j}.
\]
The eigenvalues $\lambda_j$ are positive and sum to~$1$, so this
series is norm convergent.
\end{notn}

\begin{defn}
Let $(\mmul, \pi, \vac)$ be the GNS representation corresponding to
$\state$ and let $X \mapsto [X]$ denote the induced mapping from
$\bop{\statesp}{}$ into $\mmul$, so that $\vac := [\id_\statesp]$.
Note that the representation
$\pi : \bop{\statesp}{} \to \bop{\mmul}{}$ is ultraweakly continuous,
injective and unital \cite[Theorems~2.3.16 and~2.4.24]{BrR87}, and
$[\ker \state]$ is dense in $\mul := \mmul \ominus \C \vac$.
\end{defn}

\begin{lem}\label{lem:pilift}
The slice map
$\sstate := \id_{\bop{\ini}{}} \uwkten \state : %
\bop{\ini \otimes \statesp}{} \to \bop{\ini}{}$
is completely positive and such that
\[
\sstate( T ) = \sum_{j = 0}^N \lambda_j E^{e_j} T E_{e_j} = %
\tr_\statesp( (\id_\ini \otimes \State) T ) %
\qquad \forall\, T \in \bop{\ini \otimes \statesp}{},
\]
where $\tr_\statesp$ is the partial trace over $\statesp$.

The unital $*$-homomorphism
$\ppi := \id_{\bop{\ini}{}} \uwkten \pi : %
\bop{\ini \otimes \statesp}{}\to\bop{\ini \otimes \mmul}{}$
is injective and such that
\begin{equation}\label{eqn:statelift}
E^{[X]} \ppi( T ) E_{[Y]} = %
\sstate( ( \id_\ini \otimes X )^* T ( \id_\ini \otimes Y ) ) \quad %
\forall\, X, Y \in \bop{\statesp}{}, \ %
T \in \bop{\ini \otimes \statesp}{}.
\end{equation}
\end{lem}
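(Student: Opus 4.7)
The statement decomposes naturally into three tasks: the explicit formulas for $\sstate$ and its complete positivity, the fact that $\ppi$ is an injective unital $*$-homomorphism, and the intertwining identity (\ref{eqn:statelift}). None is deep; the job is mostly to assemble the slice-map machinery already at hand.

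First, I would handle $\sstate$. Because the eigenvalue decomposition of $\State$ gives $\state = \sum_{j=0}^N \lambda_j \omega_{e_j,e_j}$, with the series converging in the predual norm of $\bop{\statesp}{}_*$, ultraweak continuity of the slice map yields $\sstate(T) = \sum_{j} \lambda_j (\id_{\bop{\ini}{}} \uwkten \omega_{e_j,e_j})(T)$ as a norm-convergent series. The Remark after the matrix-space definition already identifies $(\id_{\bop{\ini}{}} \uwkten \omega_{x,y})(T)$ with $E^x T E_y$, which gives the first equality. The partial-trace formula is direct: $\tr_\statesp((\id_\ini \otimes \State)T) = \sum_j E^{e_j}(\id_\ini \otimes \State)T E_{e_j}$, and $\State e_j = \lambda_j e_j$ lets one pull $\lambda_j$ outside. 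Complete positivity is automatic, since each summand $T \mapsto E^{e_j} T E_{e_j}$ is conjugation by an isometry (hence CP), $\lambda_j \geq 0$, and CP is preserved under norm-convergent sums.

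Next, for $\ppi$: the map $\pi$ is a unital injective normal $*$-homomorphism, so by \cite[Lemma~1.5(b)]{dCH85} the slice $\ppi = \id_{\bop{\ini}{}} \uwkten \pi$ is well-defined on $\bop{\ini \otimes \statesp}{} = \bop{\ini}{} \uwkten \bop{\statesp}{}$. Unitality and the $*$-homomorphism property are inherited from $\pi$ on elementary tensors and extend by ultraweak continuity. For injectivity, the cleanest route is to postpone it and read it off (\ref{eqn:statelift}): if $\ppi(T) = 0$ then $\sstate((\id_\ini \otimes X)^* T (\id_\ini \otimes Y)) = 0$ for all $X$, $Y \in \bop{\statesp}{}$; taking $X = Y = \id_\statesp$ and using faithfulness of $\state$ on $\bop{\statesp}{}$, together with the positive cone spanning the algebra, forces $T = 0$.

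The heart of the proof is (\ref{eqn:statelift}). The key observation is that $[X] = \pi(X)\vac$ for every $X \in \bop{\statesp}{}$, so
\[
E_{[Y]} = (\id_\ini \otimes \pi(Y)) E_\vac \qquad \text{and} \qquad E^{[X]} = E^\vac (\id_\ini \otimes \pi(X^*)).
\]
Since $\ppi$ is a $*$-homomorphism and $\ppi(\id_\ini \otimes Z) = \id_\ini \otimes \pi(Z)$, one immediately gets
\[
E^{[X]} \ppi(T) E_{[Y]} = E^\vac \ppi\bigl((\id_\ini \otimes X)^* T (\id_\ini \otimes Y)\bigr) E_\vac.
\]
It remains to recognise $E^\vac \ppi(\cdot) E_\vac$ as $\sstate$. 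This follows from the defining GNS identity $\omega_{\vac,\vac} \circ \pi = \state$ and functoriality of the slice construction:
\[
E^\vac \ppi(\cdot) E_\vac = (\id_{\bop{\ini}{}} \uwkten \omega_{\vac,\vac}) \circ (\id_{\bop{\ini}{}} \uwkten \pi) = \id_{\bop{\ini}{}} \uwkten (\omega_{\vac,\vac} \circ \pi) = \sstate.
\]
The only step requiring any real care is checking that this composition/slice identity applies, but this is standard for ultraweakly continuous maps and functionals; there is no genuine obstacle.
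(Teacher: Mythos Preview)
Your argument is sound overall and more explicit than the paper's. The paper dispatches the lemma in two lines: existence and the claimed properties of $\sstate$ and $\ppi$ are attributed to standard tensor-product results in Takesaki, and the identities are then checked on elementary tensors $T \in \bop{\ini}{} \algten \bop{\statesp}{}$ and extended by ultraweak continuity. Your route to (\ref{eqn:statelift}) is different and more structural: rather than a density argument, you factor $E_{[X]} = (\id_\ini \otimes \pi(X)) E_\vac$ via the GNS relation $[X] = \pi(X)\vac$, push $X$ and $Y$ inside using multiplicativity of $\ppi$, and reduce to the slice identity $E^\vac \ppi(\cdot) E_\vac = \id_{\bop{\ini}{}} \uwkten (\omega_{\vac,\vac} \circ \pi) = \sstate$. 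This works for all $T$ at once and makes the GNS structure do the work; the paper's approach is quicker to write but hides exactly this mechanism.

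There is, however, a small gap in your injectivity argument. Taking $X = Y = \id_\statesp$ in (\ref{eqn:statelift}) only yields $\sstate(T) = 0$, which does not force $T = 0$ (take $T = a \otimes Z$ with $\state(Z) = 0$), and ``the positive cone spanning the algebra'' does not help, since $\ppi(T) = 0$ says nothing about $\ppi$ applied to the positive parts of $T$ separately. The fix is immediate: $\ppi(T) = 0$ gives $\ppi(T^*T) = 0$, hence $\sstate(T^*T) = \sum_j \lambda_j (T E_{e_j})^*(T E_{e_j}) = 0$; since every $\lambda_j > 0$ this forces $T E_{e_j} = 0$ for all $j$, so $T = 0$.
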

\begin{proof}
The existence of $\sstate$ and $\ppi$ as claimed
follows from \cite[Proposition~IV.5.13 and Theorem~IV.5.2]{Tak79}. The
identities are immediate if
$T \in \bop{\ini}{} \algten \bop{\statesp}{}$, so hold everywhere by
ultraweak continuity.
\end{proof}

\begin{defn}
If $\wga \in \kbo{\statesp}{\opsp}{\mat{\statesp}{\opsp}}$ then
$\ppi \circ \wga \in \kbo{\mmul}{\opsp}{\mat{\mmul}{\opsp}}$ is the
\emph{GNS generator} of the quantum random walk with \emph{generator}
$\wga$ and \emph{particle state}~$\state$. (The vector state on
$\bop{\mmul}{}$ given by $\vac$ corresponds to the state $\state$ on
$\bop{\statesp}{}$.)
\end{defn}

\begin{defn}\label{def:idem}
Fix a conditional expectation $d$ on $\bop{\statesp}{}$, \ie
a linear idempotent onto a $C^*$~subalgebra $\dialg$ such that
\begin{equation}\label{eqn:dprops}
d( X^* X ) \geq 0 \ \mbox{ and } \ %
d( d( X ) Y) = d( X ) d( Y ) = d( X d( Y ) ) \ \ %
\forall\, X, Y \in \bop{\statesp}{}.
\end{equation}
Recall that $d$ is completely positive, by
\cite[Corollary~IV.3.4]{Tak79}. Suppose further that $d$ preserves the
state~$\state$, \ie $\state \circ d = \state$. Then $d$ is ultraweakly
continuous, so~$\dialg$ is a von~Neumann algebra, and
$d( \id_\statesp ) = \id_\statesp$, since
$\id_\statesp - d( \id_\statesp)$ is an orthogonal projection with
$\state( \id_\statesp - d( \id_\statesp ) ) = 0$.
\end{defn}

\begin{notn}\label{def:diag}
Letting $\diag := \id_{\bop{\ini}{}} \uwkten d$, where $d$ is as
above, the lifted map $\diag$ is a $\sstate$-preserving conditional
expectation onto $\bop{\ini}{} \uwkten \dialg$ which leaves
$\mat{\statesp}{\opsp}$ invariant. Furthermore,
\begin{equation}\label{eqn:diag}
\diag( a \otimes \id_\statesp) = a \otimes \id_\statesp %
\quad \mbox{and} \quad %
\diag( T_1 \diag( T_2 ) ) = \diag( T_1 ) \diag( T_2 ) = %
\diag( \diag( T_1 ) T_2 )
\end{equation}
for all $a\in \bop{\ini}{}$ and
$T_1$,~$T_2 \in \bop{\ini \otimes \statesp}{}$. These, together with
the identify $\sstate \circ \diag = \sstate$, imply that
\begin{alignat}{2}
\sstate( \diag( T_1 ) T_2 ) & = \sstate( T_1 \diag( T_2 ) ) \qquad %
& & \forall\, T_1, \ T_2 \in \bop{\ini \otimes \statesp}{} %
\qquad \mbox{and} \label{eqn:slicediag}\\[0.5ex]
\sstate( ( a \otimes \id_\statesp ) T ) & = a \sstate( T ) & & %
\forall\, a \in \bop{\ini}{}, \ T \in \bop{\ini \otimes \statesp}{}.%
\label{eqn:slicemod}
\end{alignat}
\end{notn}

The conditional expectation $\diag$ plays a vital r\^{o}le in scaling
quantum random walks in order to obtain convergence. The following
example is a natural choice for this map, that given by the eigenbasis
$\{ e_j \}_{j = 0}^N$.

\begin{exmp}\label{exm:diag}
Define the \emph{diagonal map}
\[
\diag_\re : \bop{\ini \otimes \statesp}{} \to %
\bop{\ini \otimes \statesp}{}; \ T %
\mapsto ( \id_\ini \otimes S )^* %
( T \otimes \id_\statesp ) ( \id_\ini \otimes S ),
\]
where the \emph{Schur isometry}
$S \in \bop{\statesp}{\statesp \otimes \statesp}$ is such that
$S e_j = e_j \otimes e_j$ for all~$j$. If~$\dialg_\re$ is the maximal
Abelian subalgebra of $\bop{\statesp}{}$ generated by
$\{ \dyad{e_j}{e_j} \}_{j = 0}^N$ then~$\diag_\re$ is the lifting of
the unique $\state$-preserving conditional expectation $d_\re$ onto
$\dialg_\re$.
\end{exmp}

\begin{defn}
If $\tau > 0$ and
$\wga \in \kbo{\statesp}{\opsp}{\mat{\statesp}{\opsp}}$ 
then the modification
$\nmodf{\wga}{\tau} \in \kbo{\statesp}{\opsp}{\mat{\statesp}{\opsp}}$
is defined by setting
\[
\nmodf{\wga}{\tau} := \tau^{-1} \diag \circ \wga' + %
\tau^{-1 / 2} \diag^\perp \circ \wga = %
( \tau^{-1} \diag + \tau^{-1 / 2} \diag^\perp ) \circ \wga',
\]
where $\wga' \in \kbo{\statesp}{\opsp}{\mat{\statesp}{\opsp}}$ is such
that $\wga'( a ) := \wga( a ) - a \otimes \id_\statesp$ for all
$a \in \opsp$ and
$\diag^\perp := \id_{\bop{\ini \otimes \statesp}{}} - \diag$. If
$\wga$ is completely bounded then so is $\nmodf{\wga}{\tau}$.
\end{defn}

\begin{thm}\label{thm:main}
Let $\tau_n > 0$ and
$\wga_n$,~$\wgb \in \kbo{\statesp}{\opsp}{\mat{\statesp}{\opsp}}$
be such that
\[
\tau_n \to {0+} \qquad \mbox{and} \qquad %
\lift{\nmodf{\wga_n}{\tau_n}}{\statesp} \to 
\lift{\wgb}{\statesp} \mbox{ strongly}
\]
as $n \to \infty$. If
$\cgb \in \kbo{\mmul}{\opsp}{\mat{\mmul}{\opsp}}$ is defined by
setting
\begin{equation}\label{eqn:gendef}
\cgb( a ) := %
\Delta^\perp ( \ppi \circ \wgb )( a ) \Delta^\perp + %
\Delta ( \ppi \circ \diag^\perp \circ \wgb )( a ) %
\Delta^\perp + \Delta^\perp %
( \ppi \circ \diag^\perp \circ \wgb )( a ) \Delta
\end{equation}
then $\cgb$ is completely bounded whenever $\wgb$ is and
$K^{\ppi \circ \wga_n, \tau_n} \to k^\cgb$. Furthermore,
\[
\mbox{if} \quad %
\| \nmodf{\wga_n}{\tau_n} - \wgb \|_{\statesp\rb} \to 0 %
\quad \mbox{then} \quad %
K^{\ppi \circ \wga_n, \tau_n} \tto{\mmulb} k^\cgb
\]
and, when $\wga_n$ and $\wgb$ are completely bounded,
\[
\mbox{if} \quad \| \nmodf{\wga_n}{\tau_n} - \wgb \|_\cb \to 0 %
\quad \mbox{then} \quad %
K^{\ppi \circ \wga_n, \tau_n} \tto{\cb} k^\cgb.
\]
\end{thm}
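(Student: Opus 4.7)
The plan is to reduce to Theorem~\ref{thm:qrw} by verifying its hypothesis for the GNS generator $\ppi \circ \wga_n$ with limit $\cgb$. By Remark~\ref{rem:conv}, the required strong convergence $\lift{\mmodf{\ppi \circ \wga_n}{\tau_n}}{\mmul} \to \lift{\cgb}{\mmul}$ amounts to pointwise norm convergence (when $\dim \statesp < \infty$, hence $\dim \mmul < \infty$) or $\cb$-norm convergence (when $\dim \statesp = \infty$) of $\mmodf{\ppi \circ \wga_n}{\tau_n}$ to $\cgb$; by the same remark, the hypothesis translates into exactly the analogous convergence of $\nmodf{\wga_n}{\tau_n}$ to $\wgb$, and the two cases align because the finiteness of $\dim \statesp$ is equivalent to that of $\dim \mmul$.

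The heart of the proof is a single algebraic identity. Because $\diag$ and $\diag^\perp$ are complementary idempotents, setting $S_n := \nmodf{\wga_n}{\tau_n}(a)$ one has $\wga_n'(a) = \tau_n\, \diag S_n + \tau_n^{1/2}\, \diag^\perp S_n$. Since $\pi$ is unital, substituting into
\[
\mmodf{\ppi \circ \wga_n}{\tau_n}(a) = (\tau_n^{-1/2}\Delta^\perp + \Delta)\, \ppi( \wga_n'(a) )\, (\tau_n^{-1/2}\Delta^\perp + \Delta)
\]
and expanding yields terms with coefficients $\tau_n^{k/2}$ for $k \in \{-1, 0, 1, 2\}$. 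The unique $k = -1$ contribution is $\tau_n^{-1/2}\, \Delta^\perp \ppi(\diag^\perp S_n) \Delta^\perp$, and it vanishes by the key cancellation $\Delta^\perp \ppi(\diag^\perp T) \Delta^\perp = \sstate(\diag^\perp T) \otimes \dyad{\vac}{\vac} = 0$, which combines Lemma~\ref{lem:pilift} with $\sstate \circ \diag = \sstate$. What remains may be written $\mmodf{\ppi \circ \wga_n}{\tau_n}(a) = L_{\tau_n}( S_n )$ for a family of completely bounded linear maps $L_\tau : \mat{\statesp}{\opsp} \to \mat{\mmul}{\opsp}$ of the form $L_\tau = L_0 + \tau^{1/2} L' + \tau L''$, with $\| L_\tau \|_\cb$ uniformly bounded for small $\tau$; the same cancellation applied to $\wgb(a)$ shows $L_0 \circ \wgb = \cgb$, which simultaneously establishes that $\cgb$ is completely bounded whenever $\wgb$ is.

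With this identity in hand, the proof is finished by the decomposition
\[
\mmodf{\ppi \circ \wga_n}{\tau_n} - \cgb = L_{\tau_n} \circ ( \nmodf{\wga_n}{\tau_n} - \wgb ) + ( L_{\tau_n} - L_0 ) \circ \wgb.
\]
Sub-multiplicativity of the $\cb$ norm (and of the operator norm on $\opsp$-valued maps) together with the uniform bound on $\| L_\tau \|_\cb$ controls the first summand via the hypothesis, while the second has norm $O(\tau_n^{1/2})$ by construction. Both tend to zero in the appropriate sense, so Theorem~\ref{thm:qrw} delivers $K^{\ppi \circ \wga_n, \tau_n} \to k^\cgb$ and, under the strengthened hypotheses on $\| \nmodf{\wga_n}{\tau_n} - \wgb \|_{\statesp\rb}$ or $\| \cdot \|_\cb$, the corresponding $\mmulb$- and $\cb$-strengthenings.

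The main obstacle is seeing that the apparent $\tau_n^{-1/2}$ blow-up in the $\Delta^\perp$-$\Delta^\perp$ block is annihilated precisely by $\sstate \circ \diag^\perp = 0$, and that this is exactly the cancellation that forces the gauge block of $\cgb$ to vanish; once this is recognised the rest is bookkeeping and a direct appeal to Theorem~\ref{thm:qrw}, which is why the proof is ``remarkably simple''.
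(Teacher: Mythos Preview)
Your proposal is correct and follows essentially the same route as the paper: invert the scaling to write $\wga_n' = (\tau_n\diag + \tau_n^{1/2}\diag^\perp)\circ\nmodf{\wga_n}{\tau_n}$, expand $\mmodf{\ppi\circ\wga_n}{\tau_n}$ block-wise, kill the $\tau_n^{-1/2}$ term via $\sstate\circ\diag^\perp = 0$, and appeal to Theorem~\ref{thm:qrw}. Your packaging of the order-zero part as a single map $L_0$ and the error splitting $L_{\tau_n}\circ\Theta_n + (L_{\tau_n}-L_0)\circ\wgb$ is a cosmetic variant of the paper's $L_0\circ\Theta + \tau^{1/2}R$ (with $R$ built from $\nmodf{\wga_n}{\tau_n}$ rather than $\wgb$); both are immediate rearrangements of $L_{\tau_n}(S_n)-L_0(\wgb(a))$, and your explicit invocation of Remark~\ref{rem:conv} makes the passage to lifted convergence slightly more transparent than in the original.

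One small inaccuracy in your closing commentary: the vanishing of the gauge block $\Delta\cgb\Delta$ is not the \emph{same} cancellation as $\sstate\circ\diag^\perp=0$; it comes simply from the fact that the $\Delta$--$\Delta$ terms in the expansion carry positive powers of $\tau$ and so do not survive in $L_0$. This does not affect the argument.
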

\begin{proof}
Note first that
$\wga' = %
( \tau \diag + \tau^{1 / 2} \diag^\perp ) \circ \nmodf{\wga}{\tau}$
for all $\tau > 0$. If $X, \ Y \in \ker \state$ and $a \in \opsp$ then
(\ref{eqn:statelift}) and the identity $\sstate \circ \diag = \sstate$
imply that
\begin{align*}
E^\vac \mmodf{\ppi \circ \wga}{\tau}( a ) E_\vac & = %
\tau^{-1} E^\vac \ppi( \wga'( a ) ) E_\vac = %
\tau^{-1} \sstate( \wga'( a ) ) \\
& = \sstate( \nmodf{\wga}{\tau}( a ) ) = %
E^\vac ( \ppi \circ \nmodf{\wga}{\tau} )( a ) E_\vac, \\[0.5ex]
E^{[X]} \mmodf{\ppi \circ \wga}{\tau}( a ) E_\vac & = %
\tau^{-1 / 2} E^{[X]} \ppi( \wga'( a ) ) E_{\vac} \\
& = E^{[X]} ( \ppi \circ ( \tau^{1 / 2} \diag + \diag^\perp ) %
\circ \nmodf{\wga}{\tau} )( a ) E_\vac, \\[0.5ex]
E^\vac \mmodf{\ppi \circ \wga}{\tau}( a ) E_{[Y]} & = %
E^\vac ( \ppi \circ ( \tau^{1 / 2} \diag + \diag^\perp ) %
\circ \nmodf{\wga}{\tau} )( a ) E_{[Y]} \\[0.5ex]
\mbox{and} \quad %
E^{[X]} \mmodf{\ppi \circ \wga}{\tau}(a) E_{[Y]} & = %
E^{[X]} \ppi( \wga'( a ) ) E_{[Y]} \\
& = E^{[X]} ( \ppi \circ ( \tau \diag + \tau^{1 / 2} \diag^\perp ) %
\circ \nmodf{\wga}{\tau} )( a ) E_{[Y]}.
\end{align*}
Letting $\Theta := \nmodf{\wga}{\tau} - \wgb$, this working shows that
\begin{align*}
( \mmodf{\ppi \circ \wga}{\tau} - \cgb )( a ) & = \Delta^\perp %
( \ppi \circ \Theta )( a ) \Delta^\perp + %
\Delta ( \ppi \circ \diag^\perp \circ \Theta )( a ) \Delta^\perp \\
& \quad + \Delta^\perp ( \ppi \circ \diag^\perp \circ %
\Theta )( a ) \Delta + \tau^{1 / 2} R( a ),
\end{align*}
where
\begin{align*}
R( a ) & := \Delta ( \ppi \circ \diag \circ %
\nmodf{\wga}{\tau} )( a ) \Delta^\perp + \Delta^\perp ( \ppi \circ %
\diag \circ \nmodf{\wga}{\tau} )( a ) \Delta \\
& \quad + \Delta ( \ppi \circ ( \tau^{1 / 2} \diag + \diag^\perp ) %
\circ \nmodf{\wga}{\tau} )( a ) \Delta.
\end{align*}
The result follows, by Theorem~\ref{thm:qrw}.
\end{proof}

\begin{rem}\label{rem:coord}
If $\wgb \in \kbo{\statesp}{\opsp}{\mat{\statesp}{\opsp}}$ and $\cgb$
is defined by (\ref{eqn:gendef}) then the identities
(\ref{eqn:statelift}) and (\ref{eqn:slicediag}) imply that
\begin{align*}
E^\vac \cgb( a ) E_\vac & = %
\sstate( \wgb( a ) ) = E^\vac \ppi( \wgb( a ) ) E_\vac, \\[0.5ex]
E^{[X]} \cgb( a ) E_\vac & = \sstate( %
( \id_\ini \otimes X )^* ( \diag^\perp \circ \wgb )( a ) ) = %
E^{[d^\perp( X )]} \ppi( \wgb( a ) ) E_\vac, \\[0.5ex]
E^\vac \cgb( a ) E_{[Y]} & = \sstate( %
( \diag^\perp \circ \wgb )( a ) ( \id_\ini \otimes Y) ) = %
E^\vac \ppi( \wgb( a ) ) E_{[d^\perp( Y )]} \\[0.5ex]
\mbox{and} \quad E^{[X]} \cgb( a ) E_{[Y]} & = 0
\end{align*}
for all $X, \ Y \in \ker \state$ and $a \in \opsp$, where
$d^\perp := \id_{\bop{\statesp}{}} - d$. If
$n := \dim \statesp < \infty$ then at most
\[
2 \dim \{ [d^\perp( X )] : X \in \ker \state \} \leq 2 ( n^2 - 1 )
\]
independent noises appear in the quantum Langevin
equation~(\ref{eqn:coqsde}) satisfied by~$k^\cgb$, with equality in
the above if $d( X ) = \state( X ) \id_\statesp$ for all
$X \in \bop{\statesp}{}$.
\end{rem}

\begin{rem}
Although the construction in Example~\ref{exm:diag} may appear to
depend on the choice of orthonormal basis which diagonalises $\State$,
this is not really so. To see this, suppose $\{ f_j \}_{j = 0}^N$ is
another eigenbasis for $\State$, labelled so that~$e_j$ and~$f_j$ have
the same eigenvalue for all $j$, let $d_\re$ and $d_\rf$ be the
$\state$-preserving conditional expectations from $\bop{\statesp}{}$
onto the subalgebras generated by $\{ \dyad{e_j}{e_j} \}_{j = 0}^N$
and $\{ \dyad{f_j}{f_j} \}_{j = 0}^N$, and let $\diag_\re$ and
$\diag_\rf$ be their lifts to $\bop{\ini \otimes \statesp}{}$.

If $U \in \bop{\statesp}{}$ is the unique unitary operator such that
$U e_j = f_j$ for all~$j$ then
\[
\state( U^* X U ) = \state( X ) \quad \mbox{and} \quad %
U^* d_\rf( X ) U = d_\re( U^* X U ) \qquad %
\forall\, X \in \bop{\statesp}{}.
\]
Hence, if
$\wga$,~$\wgb \in \kbo{\statesp}{\opsp}{\mat{\statesp}{\opsp}}$ and 
$\widetilde{U} := \id_\ini \otimes U$,
\[
(m_{d_\re}(\wga, \tau) - \wgb)( a ) = \widetilde{U}^* %
( m_{d_\rf}(\check{\wga}, \tau) - \check{\wgb} )( a ) %
\widetilde{U} \qquad \forall\, a \in \opsp,
\]
where
$\check{\wga} : a \mapsto \widetilde{U} \wga( a ) \widetilde{U}^*$
\etc. Let $\cgb_\re$ and $\cgb_\rf$ be defined by (\ref{eqn:gendef}),
but with~$\diag$ equal to $\diag_\re$ and $\diag_\rf$, respectively,
and $\wgb$ replaced by $\check{\wgb}$ in the latter case; if
$a \in \opsp$ and $X$,~$Y \in \ker \state$ then
\begin{align*}
E^\vac \cgb_\rf( a ) E_\vac & = %
 \sstate( \check{\wgb}( a ) ) = \sstate( \wgb( a ) ) = %
E^\vac \cgb_\re( a ) E_\vac, \\[0.5ex]
E^{[X]} \cgb_\rf( a ) E_\vac & = \sstate( %
( \id_\ini \otimes X )^* %
( \diag_\rf^\perp \circ \check{\wgb} )( a ) ) \\
 & = \sstate( \widetilde{U}^* ( \id_\ini \otimes X^* ) \widetilde{U} %
\widetilde{U}^* \diag_\rf^\perp ( \check{\wgb}( a ) ) %
\widetilde{U} ) \\
 & = \sstate( ( \id_\ini \otimes U^* X U )^* %
\diag_\re^\perp ( \wgb( a ) ) ) = %
E^{[U^* X U]} \cgb_\re( a ) E_\vac \\[0.5ex]
E^\vac \cgb_\rf( a ) E_{[Y]} & = %
E^\vac \cgb_\re( a ) E_{[U^* Y U]} \\[0.5ex]
\mbox{and} \quad %
E^{[X]} \cgb_\rf( a ) E_{[Y]} & = 0 = %
E^{[U^* X U]} \cgb_\re( a ) E_{[U^* Y U]}.
\end{align*}
Thus if $W \in \bop{\mmul}{}$ is the unique unitary operator such that
$W[X]=[U^* X U]$ for all $X\in\bop{\statesp}{}$ then
\[
\cgb_\rf( a ) = (\id_\ini \otimes W)^* \cgb_\re( a ) %
(\id_\ini \otimes W ) \qquad \forall\, a \in \opsp;
\]
the change of orthonormal basis used to define the diagonal map
is manifest as a change of coordinates (an isometric isomorphism
of~$\mmul$ which preserves $\vac$) and unitary conjugation of the
map~$\wgb$.
\end{rem}

\section{Examples}\label{sec:egs}

Henceforth $\opsp$ will be a von~Neumann algebra $\vna$ and
$\mat{\statesp}{\vna} = \vna \uwkten \bop{\statesp}{}$. As above, $d$
is a conditional expectation on $\bop{\statesp}{}$ which preserves the
faithful normal state $\state$ and $\diag = \id_{\bop{\ini}{}} \uwkten
d$.

\subsection{Hudson--Parthasarathy evolutions}

\begin{rem}
Suppose $F \in \vna \uwkten \bop{\statesp}{}$ and let
$\wgb : \vna \to \vna \uwkten \bop{\statesp}{}$ be such that
$\wgb( a ) = ( a \otimes \id_\statesp ) F$ for all $a \in \vna$. If
$\cgb$ is defined by~(\ref{eqn:gendef}) then
$\cgb( a ) = ( a \otimes \id_\mmul ) G$ for all $a \in \vna$, by
(\ref{eqn:diag}), where
\begin{equation}\label{eqn:gdef}
G = \Delta^\perp \ppi( F )
\Delta^\perp + \Delta \ppi( \diag^\perp( F ) ) \Delta^\perp + %
\Delta^\perp \ppi( \diag^\perp( F ) ) \Delta.
\end{equation}
Furthermore, the cocycle $k^\cgb$ is such that
$k^\cgb_t( a ) = ( a \otimes \id_\fock ) X_t$ for all $a \in \vna$ and
$t \in \R_+$, where the adapted $\ini$~process
$X = \{ X_t := k^\cgb_t( \id_\ini ) \}_{t \in \R_+}$ satisfies the
Hudson--Parthasarathy equation
\begin{equation}\label{eqn:hpee}
X_0 = \id_{\ini \otimes \fock}, \qquad %
\rd X_t = \rd \Lambda_G( t ) X_t.
\end{equation}
(See \cite[Proof of Theorem~7.1]{LiW00}.)
\end{rem}

\begin{thm}\label{thm:warmhp}
Let $\hdiag$ and $\hoffd$ be self-adjoint elements of
$\vna \uwkten \bop{\statesp}{}$ such that $\hdiag = \diag( \hdiag )$
and $\hoffd = \diag^\perp( \hoffd )$. If $\tau > 0$ and
$\htot(\tau) := \hdiag + \tau^{-1 / 2} \hoffd$ then the completely
isometric map
\begin{equation}\label{eqn:warmhpgen}
\wga(\tau) : \vna \to \vna \uwkten \bop{\statesp}{}; \ %
a \mapsto ( a \otimes \id_\statesp ) \exp( {-\I} \tau \htot(\tau) )
\end{equation}
is such that $\| \nmodf{\wga(\tau)}{\tau} - \wgb \|_\cb \to 0$ as
$\tau \to {0+}$, where
\[
\wgb : \vna \to \vna \uwkten \bop{\statesp}{}; \ %
a \mapsto ( a \otimes \id_\statesp ) %
( {-\I} ( \hdiag + \hoffd ) - \half \diag( \hoffd ^2 ) ).
\]
Thus $K^{\ppi \circ \wga(\tau_n), \tau_n} \tto{\cb} k^\cgb$ if
$\tau_n \to {0+}$, where
$\cgb : \vna \to \vna \uwkten \bopp(\mmul)$ is such that
\begin{align*}
E^\vac \cgb( a ) E_\vac & = {-\I} a \sstate( \hdiag ) - %
\half a \sstate( \hoffd^2 ), \\[0.5ex]
E^{[X]} \cgb( a ) E_\vac & = {-\I} a \sstate( %
( \id_\ini \otimes X )^* \hoffd ), \\[0.5ex]
E^\vac \cgb( a ) E_{[Y]} & = {-\I} a \sstate( %
\hoffd ( \id_\ini \otimes Y ) ) \\[0.5ex]
\mbox{and} \quad E^{[X]} \cgb( a ) E_{[Y]} & = 0
\end{align*}
for all $X$,~$Y \in \ker \state$ and $a \in \vna$, and
$U_t := k^\cgb_t( \id_\ini)$ is unitary for all $t \in \R_+$.
\end{thm}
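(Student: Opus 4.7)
The plan is to reduce the theorem to Theorem~\ref{thm:main} by establishing that $\|\nmodf{\wga(\tau)}{\tau} - \wgb\|_\cb \to 0$ as $\tau \to 0+$, then to read off the matrix elements of $\cgb$ via Remark~\ref{rem:coord}, and finally to verify that the resulting Hudson--Parthasarathy generator has a unitary solution.

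For the $\cb$ estimate I would Taylor expand $W(\tau) := \exp(-\I(\tau\hdiag + \tau^{1/2}\hoffd))$: since the exponent has norm $O(\tau^{1/2})$, the second-order Taylor remainder satisfies
\[
W(\tau) - \id_{\ini\otimes\statesp} = -\I(\tau\hdiag + \tau^{1/2}\hoffd) - \half(\tau\hdiag + \tau^{1/2}\hoffd)^2 + R(\tau)
\]
with $\|R(\tau)\| = O(\tau^{3/2})$. Expanding the square, the cross terms $\hdiag\hoffd + \hoffd\hdiag$ come with coefficient $\tau^{3/2}$ and can be absorbed into $R(\tau)$. Applying $\diag$ and $\diag^\perp$, and using the module property~(\ref{eqn:diag}) (which gives $\diag(\hdiag T) = \hdiag \diag(T)$ and $\diag(T \hdiag) = \diag(T) \hdiag$, so in particular $\diag(\hdiag\hoffd) = \diag(\hoffd\hdiag) = 0$), I obtain
\begin{align*}
\diag(W(\tau) - \id_{\ini\otimes\statesp}) &= -\I\tau\hdiag - \half\tau\diag(\hoffd^2) + O(\tau^{3/2}),\\
\diag^\perp(W(\tau) - \id_{\ini\otimes\statesp}) &= -\I\tau^{1/2}\hoffd + O(\tau)
\end{align*}
in operator norm. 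Scaling by $\tau^{-1}$ and $\tau^{-1/2}$ respectively, pulling the prefactor $a \otimes \id_\statesp$ through $\diag$ via (\ref{eqn:diag}), and noting that $a \mapsto (a \otimes \id_\statesp) T$ has cb norm $\|T\|$, one concludes $\|\nmodf{\wga(\tau)}{\tau} - \wgb\|_\cb = O(\tau^{1/2})$. Theorem~\ref{thm:main} then yields $K^{\ppi \circ \wga(\tau_n), \tau_n} \tto{\cb} k^\cgb$.

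The coordinate formulae for $\cgb$ are a direct consequence of Remark~\ref{rem:coord} and (\ref{eqn:slicemod}). Writing $\wgb(a) = (a \otimes \id_\statesp) F$ with $F := -\I(\hdiag + \hoffd) - \half \diag(\hoffd^2)$, the identity $\sstate((a \otimes \id_\statesp) T) = a\,\sstate(T)$ factors $a$ out of every entry. The $(\vac,\vac)$ entry simplifies via $\sstate(\hoffd) = \sstate(\diag^\perp(\hoffd)) = 0$, which follows from $\sstate \circ \diag = \sstate$; the off-diagonal entries reduce via $\diag^\perp(F) = -\I\hoffd$, since $\diag$ fixes both $\hdiag$ and $\diag(\hoffd^2)$.

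For the unitarity of $U_t := k^\cgb_t(\id_\ini)$, the Remark preceding the theorem identifies $U$ as the solution of $\rd U_t = \rd\Lambda_G(t) U_t$ with $U_0 = \id_{\ini\otimes\fock}$, where $G$ is given by (\ref{eqn:gdef}). Setting $L := -\I\,\Delta\,\ppi(\hoffd)\,E_\vac$, the relation $\Delta = \id_{\ini\otimes\mmul} - E_\vac E^\vac$ together with $\sstate(\hoffd) = 0$ yields $L^* L = \sstate(\hoffd^2)$. Reading $G$ in block form with respect to the decomposition $\ini \otimes \mmul = \ini \otimes \C\vac \oplus \ini \otimes \mul$, one finds
\[
G = \begin{bmatrix} -\I \sstate(\hdiag) - \half L^* L & -L^* \\[0.5ex] L & 0 \end{bmatrix},
\]
which is exactly the Hudson--Parthasarathy generator of Example~\ref{eg:hp} with the self-adjoint element $\sstate(\hdiag)$ in place of $\hsys + \mu \id_\ini$; unitarity of $U_t$ therefore follows from \cite[Theorem~7.5]{LiW00} as used in that example. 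The only mild obstacle I foresee is the Taylor-expansion bookkeeping, checking that all cross terms either vanish under $\diag$ or are of sufficiently high order to survive the $\tau^{-1}$ scaling; every remaining step is a direct consequence of the module property of $\diag$ and the identities $\sstate \circ \diag = \sstate$ and $\sstate(\hoffd) = 0$.
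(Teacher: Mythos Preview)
Your proposal is correct and follows essentially the same route as the paper: Taylor expand $\exp(-\I\tau\htot(\tau))$, split via $\diag$ and $\diag^\perp$ using the module property~(\ref{eqn:diag}), apply Theorem~\ref{thm:main} and Remark~\ref{rem:coord}, and invoke \cite[Theorem~7.5]{LiW00} for unitarity. Your additional step of writing $G$ explicitly in block form and checking $L^*L = \sstate(\hoffd^2)$ is a helpful elaboration of what the paper leaves implicit when it simply cites \cite[Theorem~7.5]{LiW00}.
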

\begin{proof}
If $a \in \vna$ then
$\diag ( \wga(\tau)'( a ) ) = ( a \otimes \id_\statesp ) %
\diag( \exp( {-\I} \tau \htot(\tau) ) - %
\id_{\ini \otimes \statesp} )$,
by (\ref{eqn:diag}), and the same holds with $\diag$ replaced by
$\diag^\perp$. As $\tau \to {0+}$,
\begin{align*}
\tau^{-1} \diag( \exp( {-\I} \tau \htot(\tau) ) - %
\id_{\ini \otimes \statesp} ) & = {-\I} \hdiag - %
\half \diag( \hoffd ^2 ) + O( \tau^{1 / 2} ) \\[0.5ex]
\mbox{and} \quad \tau^{-1 / 2} %
\diag^\perp( \exp( {-\I} \tau \htot(\tau) ) - %
\id_{\ini \otimes \statesp} ) & = {-\I} \hoffd + O( \tau^{1 / 2} ),
\end{align*}
which gives the first claim. Theorem~\ref{thm:main} and
Remark~\ref{rem:coord}, simplified using the identities
(\ref{eqn:diag}), (\ref{eqn:slicemod}) and
$\sstate = \sstate \circ \diag$, complete the result; unitarity
holds for the adapted $\ini$~process $U = \{ U_t \}_{t \in \R_+}$ by
\cite[Theorem~7.5]{LiW00}.
\end{proof}

\begin{exmp}\label{eg:ajhp}
Let $\diag = \diag_\re$ be the diagonal map of
Example~\ref{exm:diag} and suppose
$\hdiag := \hsys \otimes \id_\statesp + \id_\ini \otimes \hpar$, where
the self-adjoint operators $\hsys \in \vna$ and
\[
\hpar = \sum_{j = 0}^N \mu_j \dyad{e_j}{e_j} \in \bop{\statesp}{},
\]
with this series strongly convergent when $N=\infty$. Let
$\statesp_\times := \statesp \ominus \C e_0$, choose
$V \in \vna \uwkten \bop{\C}{\statesp_\times}$ and define
\[
\hoffd := Q V E^{e_0} + E_{e_0} V^* Q^* = %
\begin{bmatrix} 0 & V^* \\[1ex] V & 0 \end{bmatrix},
\]
where
$Q : %
\ini \otimes \statesp_\times \hookrightarrow \ini \otimes \statesp$
is the natural embedding. If $\tau_n \to {0+}$ and $\wga(\tau)$ is
defined by (\ref{eqn:warmhpgen}) then Theorem~\ref{thm:warmhp} implies
that $K^{\ppi \circ \wga(\tau_n), \tau_n} \tto{\cb} k^\cgb$, where
\begin{align*}
E^\vac \cgb( a ) E_\vac & = %
{-\I} a ( \hsys + \state( \hpar ) \id_\ini ) - %
\half a \sstate( \hoffd^2 ), \\[0.5ex]
E^{[X]} \cgb(a) E_\vac & = {-\I} a ( E^{X \State e_0} Q V + %
V^* Q^* E_{\State X^* e_0} ), \\[0.5ex]
E^\vac \cgb( a ) E_{[Y]} & = {-\I} a ( E^{\State Y^* e_0} Q V + %
V^* Q^* E_{Y \State e_0} ) \\[0.5ex]
\mbox{and} \quad E^{[X]} \cgb( a ) E_{[Y]} & = 0 & 
\end{align*}
for all $X$,~$Y \in \ker \state$ and $a \in \vna$. If
$\dim \statesp < \infty$ then this agrees with
\cite[Theorem~7]{AtJ07}. Letting $\lambda_0 = 1$ and
$\lambda_j = 0$ for all $j > 0$, the above is also in formal agreement
with Example~\ref{eg:hp}; note that the GNS space is spanned by
$\{ [\dyad{e_j}{e_0}] \}_{j = 0}^N$ in this case.
\end{exmp}

\subsection{Evans--Hudson evolutions}

\begin{defn}
An \emph{Evans--Hudson flow} is a solution~$k$ of the quantum
stochastic differential equation (\ref{eqn:coqsde}) which is
$*$-homomorphic, \ie each $k_t( a )$ extends to a bounded operator on
$\ini \otimes \fock$ and the mapping $a \mapsto k_t( a )$ is a
$*$-homomorphism from~$\vna$ to $\bop{\ini \otimes \fock}{}$ for all
$t \in \R_+$.
\end{defn}

\begin{rem}
If $F \in \vna \uwkten \bop{\statesp}{}$ and
$\wgb : \vna \to \vna \uwkten \bop{\statesp}{}$ such that
\[
\wgb( a ) = %
( a \otimes \id_\statesp ) F + F^* ( a \otimes \id_\statesp ) + %
\diag( \diag^\perp( F )^* ( a \otimes \id_\statesp ) %
\diag^\perp( F ) ) \qquad \forall\, a \in \vna
\]
then a short calculation shows that $\cgb$ defined by
(\ref{eqn:gendef}) is such that
\[
\cgb( a ) = %
( a \otimes \id_\mmul ) G + G^* ( a \otimes \id_\mmul ) + %
G^* \Delta ( a \otimes \id_\mmul ) \Delta G \qquad %
\forall\, a \in \vna,
\]
with $G$ as in (\ref{eqn:gdef}). It follows \cite[Theorem~7.4]{LiW00}
that $k^\cgb_t( a ) = X_t^* ( a \otimes \id_\mmul ) X_t$ for all
$a \in \vna$ and $t \in \R_+$, where $X$ is the solution of
(\ref{eqn:hpee}); if $X$ is co-isometric then~$k^\cgb$ is an
\emph{inner} Evans--Hudson flow.
\end{rem}

Recall that $[ x, y ] := x y - y x$ and $\{ x, y \} := x y + y x$ are
the commutator and the anticommutator, respectively.

\begin{thm}\label{thm:warmeh}
If $\hdiag$, $\hoffd$ and $\htot$ are as in Theorem~\ref{thm:warmhp}
then the ultraweakly continuous unital $*$-homomorphism
\begin{equation}\label{eqn:eh}
\wga(\tau) : \vna \to \vna \uwkten \bop{\statesp}{}; \ %
a \mapsto \exp( \I \tau \htot ) ( a \otimes \id_\statesp ) %
\exp( {-\I} \tau \htot )
\end{equation}
is such that
$\| \nmodf{\wga(\tau)}{\tau} - \wgb \|_\cb \to 0$ as $\tau \to {0+}$,
where
\begin{align*}
\wgb : \vna & \to \vna \uwkten \bop{\statesp}{}; \\
a & \mapsto {-\I} [ a \otimes \id_\statesp, \hdiag + \hoffd ] + %
\diag( \hoffd ( a \otimes \id_\statesp ) \hoffd ) - %
\half \{ a \otimes \id_\statesp, \diag( \hoffd^2 ) \}.
\end{align*}
Hence $K^{\ppi \circ \wga(\tau_n), \tau_n} \tto{\cb} k^\cgb$ if
$\tau_n \to {0+}$, where $\cgb : \vna \to \vna \uwkten \bopp(\mmul)$
is completely bounded and
\begin{align*}
E^\vac \cgb( a ) E_\vac & = {-\I} [ a, \sstate( \hdiag ) ] %
+ \sstate( \hoffd ( a \otimes \id_\statesp ) \hoffd ) %
- \half \{ a, \sstate( \hoffd ^2 ) \}, \\[0.5ex]
E^{[X]} \cgb( a ) E_\vac & = {-\I} [ a, \sstate( %
( \id_\ini \otimes X )^* \hoffd ) ], \\[0.5ex]
E^\vac \cgb( a ) E_{[Y]} & = {-\I} [ a, \sstate( %
\hoffd ( \id_\ini \otimes Y ) ) ] \\[0.5ex]
\mbox{and} \quad E^{[X]} \cgb( a ) E_{[Y]} & = 0
\end{align*}
for all $X$,~$Y \in \ker \state$ and $a \in \vna$, and $k^\cgb$ is an
inner Evans--Hudson flow.
\end{thm}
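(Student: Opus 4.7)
The plan is to mirror the proof of Theorem~\ref{thm:warmhp}: Taylor expand the conjugation $e^{\I\tau\htot(\tau)}(a\otimes\id_\statesp)e^{-\I\tau\htot(\tau)}$ in powers of $\tau^{1/2}$, project separately onto the ranges of $\diag$ and $\diag^\perp$, apply Theorem~\ref{thm:main}, and finally recognise $\wgb$ as having the inner Evans--Hudson form identified in the remark preceding Theorem~\ref{thm:warmeh}.

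First I would expand using $e^{\I\tau H} A e^{-\I\tau H} - A = \sum_{k \geq 1} (\I\tau)^k \mathrm{ad}_H^k(A) / k!$ with $H = \htot(\tau)$ and $A := a \otimes \id_\statesp$; observing that $\tau H = \tau \hdiag + \tau^{1/2} \hoffd$ and collecting by powers of $\tau^{1/2}$ gives
\[
\wga(\tau)'(a) = \I\tau^{1/2}[\hoffd, A] + \I\tau[\hdiag, A] - \half\tau[\hoffd, [\hoffd, A]] + O(\tau^{3/2}),
\]
with remainder controlled in completely bounded norm, since each coefficient is the composition of the completely isometric map $a \mapsto A$ with bounded left/right multiplications. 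Next I would exploit the hypotheses $\diag(\hdiag) = \hdiag$, $\diag(\hoffd) = 0$, $\diag(A) = A$ together with the module identities~(\ref{eqn:diag}) to deduce that $[\hdiag, A]$ is $\diag$-invariant, that $\diag([\hoffd, A]) = 0$, and that
\[
\diag([\hoffd, [\hoffd, A]]) = \{A, \diag(\hoffd^2)\} - 2\diag(\hoffd A \hoffd).
\]
The critical cancellation is the vanishing of $\diag([\hoffd, A])$: without it, $\tau^{-1}\diag(\wga(\tau)'(a))$ would contain an unbounded $\tau^{-1/2}$ contribution. Dividing the expansion by $\tau$ on the diagonal part and by $\tau^{1/2}$ on the off-diagonal part and summing yields $\nmodf{\wga(\tau)}{\tau}(a) = \wgb(a) + O(\tau^{1/2})$ in $\cb$ norm, so Theorem~\ref{thm:main} delivers the convergence $K^{\ppi \circ \wga(\tau_n), \tau_n} \tto{\cb} k^\cgb$, with the matrix elements of $\cgb$ read off from Remark~\ref{rem:coord} upon simplification via (\ref{eqn:diag}), (\ref{eqn:slicemod}) and the identity $\sstate = \sstate \circ \diag$.

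To identify $k^\cgb$ as an inner Evans--Hudson flow, I would verify that $\wgb$ has the structural form from the Evans--Hudson remark with the choice $F := {-\I}(\hdiag + \hoffd) - \half \diag(\hoffd^2)$, which coincides with the $F$ produced in Theorem~\ref{thm:warmhp}. Since $\diag(F) = {-\I}\hdiag - \half \diag(\hoffd^2)$ and $\diag^\perp(F) = {-\I}\hoffd$, a short direct calculation confirms
\[
A F + F^* A + \diag(\diag^\perp(F)^* A \diag^\perp(F)) = \wgb(a),
\]
so $k^\cgb_t(a) = X_t^* (a \otimes \id_\mmul) X_t$ where $X$ solves the Hudson--Parthasarathy equation~(\ref{eqn:hpee}) for the corresponding $G$; but Theorem~\ref{thm:warmhp} already identified $X$ with the unitary cocycle $U$, so $k^\cgb$ is an inner Evans--Hudson flow. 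The main obstacle is not conceptual but purely organisational: controlling the Taylor remainders in $\cb$~norm and correctly deploying the idempotent identities for $\diag$. The heart of the argument is the elementary cancellation $\diag([\hoffd, A]) = 0$, which is what makes the non-standard scaling of the modification $\nmodf{\wga(\tau)}{\tau}$ compatible with the conjugation evolution.
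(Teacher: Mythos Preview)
Your proposal is correct and is exactly the intended argument: the paper's own proof is simply ``This follows in the same manner as Theorem~\ref{thm:warmhp}'', and what you have written is precisely the unpacking of that sentence, with the Taylor expansion of the conjugation, the diagonal/off-diagonal splitting via the module identities~(\ref{eqn:diag}), the application of Theorem~\ref{thm:main} and Remark~\ref{rem:coord}, and the identification of~$\wgb$ with the inner Evans--Hudson form of the preceding remark using the $F$ from Theorem~\ref{thm:warmhp}. Your explicit isolation of the cancellation $\diag([\hoffd, a \otimes \id_\statesp]) = 0$ is the key point that makes the scaling work, and your computation of $\diag([\hoffd,[\hoffd, a \otimes \id_\statesp]])$ is correct.
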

\begin{proof}
This follows in the same manner as Theorem~\ref{thm:warmhp}.
\end{proof}

\begin{rem}
Since $\wga(\tau)$ in (\ref{eqn:eh}) is an ultraweakly continuous
$*$-homomorphism, so are $\wga(\tau)^{(n)}$, for all $n \in \Z_+$, and
the embedded walk~$K^{\ppi \circ \wga_n(\tau_n), \tau_n}$. It follows,
by strong convergence, that the limit cocycle $k^\cgb$ of
Theorem~\ref{thm:warmeh} is $*$-homomorphic; this shows directly that
$k^\cgb$ is an Evans--Hudson flow.
\end{rem}

\begin{exmp}\label{eg:ajeh}
If $\diag$, $\hdiag$, $\hoffd$ and $Q$ are as in Example~\ref{eg:ajhp}
and the generator~$\wga(\tau)$ is defined by (\ref{eqn:eh}) then
$K^{\ppi \circ \wga(\tau_n), \tau_n} \tto{\cb} k^\cgb$ if
$\tau_n \to {0+}$, where $\cgb$ is such that
\begin{align*}
E^\vac \cgb( a ) E_\vac & = {-\I} [ a, \hsys ] + %
\sstate( \hoffd (a \otimes \id_\statesp ) \hoffd ) - %
\half \{ a, \sstate( \hoffd^2 ) \}, \\[0.5ex]
E^{[X]} \cgb(a) E_\vac & = {-\I} [ a, %
E^{X \State e_0 } Q V + V^* Q^* E_{\State X^* e_0} ], \\[0.5ex]
E^\vac \cgb( a ) E_{[Y]} & = {-\I} [ a, %
E^{\State Y^* e_0} Q V + V^* Q^* E_{Y \State e_0} ] \\[0.5ex]
\mbox{and} \quad E^{[X]} \cgb( a ) E_{[Y]} & = 0
\end{align*}
for all $X$,~$Y \in \ker \state$ and $a \in \vna$. When
$\dim \statesp < \infty$, the map $a\mapsto E^\vac \cgb( a ) E_\vac$
is the Lindblad generator of \cite[Corollary~13]{AtJ07}. Formally, the
above agrees with Example~\ref{eg:eh} when $\lambda_0 = 1$ and
$\lambda_j = 0$ for all $j > 0$.
\end{exmp}

\phantomsection%
\addcontentsline{toc}{section}{Acknowledgements}%
\section*{Acknowledgements}
The author completed part of this work while an Embark Postdoctoral
Fellow at University College Cork, funded by the Irish Research
Council for Science, Engineering and Technology. It was begun at the
International Workshop on Quantum Probability and its Applications
held in Ferrazzano, near Campobasso, Italy, thanks to the hospitality
of the meeting's organiser, Professor Michael Skeide. Earlier drafts
of this work received generous comments from Professor Martin Lindsay
and Dr Adam Skalski; an observation of Dr Stephen Wills led to the
significant improvement of Section~3.

\phantomsection%
\addcontentsline{toc}{section}{References}%


\begin{thebibliography}{99}

\bibitem{AtJ07}
\textsc{S.~Attal} \& \textsc{A.~Joye},
\textit{The Langevin equation for a quantum heat bath},
J.~Funct.\ Anal.~247 (2007), 253--288.

\bibitem{Blt07}
\textsc{A.C.R.~Belton},
\textit{Approximation via toy Fock space -- the vacuum-adapted
viewpoint}, in: Quantum Stochastics and Information,
V.P.~Belavkin \& M.~Gu\c{t}\u{a} (eds.), World Scientific,
Singapore, 2008, 3--22.

\bibitem{Blt08}
\textsc{A.C.R.~Belton},
\textit{Random-walk approximation to vacuum cocycles},
version~4,
\href{http://arxiv.org/abs/math/0702700}{\texttt{arXiv:math/0702700}},
2008.

\bibitem{BrR87}
\textsc{O.~Bratteli} \& \textsc{D.W.~Robinson},
\textit{Operator algebras and quantum statistical mechanics 1},
second edition, Springer, Berlin, 1987.

\bibitem{dCH85}
\textsc{J.~de Canniere} \& \textsc{U.~Haagerup},
\textit{Multipliers of the Fourier \mbox{algebras} of some simple Lie
groups and their discrete subgroups},
Amer.\ J.\ Math.~107 (1985), 455--500.

\bibitem{FrS07}
\textsc{U.~Franz} \& \textsc{A.~Skalski},
\textit{Approximation of quantum L\'{e}vy processes by \mbox{quantum}
random walks},
Proc.\ Indian Acad.\ Sci.\ Math.\ Sci.~118 (2008), 281--288.

\bibitem{LiS08}
\textsc{J.M.~Lindsay} \& \textsc{A.G.~Skalski},
\textit{Quantum stochastic convolution cocycles~II},
Comm.\ Math.\ Phys.~280 (2008), 575--610.

\bibitem{LiW00}
\textsc{J.M.~Lindsay} \& \textsc{S.J.~Wills},
\textit{Existence, positivity and contractivity for \mbox{quantum}
stochastic flows with infinite dimensional noise},
Probab.\ Theory \mbox{Related} Fields~116 (2000), 505--543.

\bibitem{LiW01}
\textsc{J.M.~Lindsay} \& \textsc{S.J.~Wills},
\textit{Existence of Feller cocycles on a $C^*$-algebra},
Bull.\ London Math.\ Soc.~33 (2001), 613--621.

\bibitem{Tak79}
\textsc{M.~Takesaki},
\textit{Theory of operator algebras I},
Springer, New York, 1979.

\end{thebibliography}
\end{document}